\numberwithin{equation}{section}
\newtheorem{theorem}{Theorem}[section]
\newtheorem{lemma}[theorem]{Lemma}
\newtheorem{proposition}[theorem]{Proposition}
\newtheorem{corollary}[theorem]{Corollary}
\theoremstyle{definition}
\newtheorem{definition}[theorem]{Definition}
\theoremstyle{remark}
\newtheorem{remark}[theorem]{Remark}
\newtheorem{example}[theorem]{Example}
\newcommand{\Ass}{\operatorname{Ass}}
\newcommand{\wAss}{\operatorname{wAss}}
\newcommand{\im}{\operatorname{Im}}
\newcommand{\Egrade}{\operatorname{E.grade}}
\newcommand{\Kgrade}{\operatorname{K.grade}}
\newcommand{\Hgrade}{\operatorname{H.grade}}
\newcommand{\cgrade}{\operatorname{c.grade}}
\newcommand{\Cgrade}{\operatorname{\check{C}.grade}}
\newcommand{\Spec}{\operatorname{Spec}}
\newcommand{\rad}{\operatorname{rad}}
\newcommand{\Ext}{\operatorname{Ext}}
\newcommand{\Supp}{\operatorname{Supp}}
\newcommand{\Hom}{\operatorname{Hom}}
\newcommand{\Llm}{\lim\limits}
\newcommand{\coker}{\operatorname{coker}}
\newcommand{\fm}{\frak{m}}
\newcommand{\fp}{\frak{p}}
\newcommand{\fq}{\frak{q}}
\newcommand{\fa}{\frak{a}}
\newcommand{\fb}{\frak{b}}
\begin{document}

\author[Asgharzadeh  and Tousi ]{Mohsen Asgharzadeh and Massoud Tousi}

\title[Grade of ideals with respect to torsion theories ]
{Grade of ideals with respect to torsion theories }

\address{M. Asgharzadeh, School of Mathematics, Institute for
Research in Fundamental Sciences (IPM), P.O. Box 19395-5746, Tehran,
Iran.}
\email{asgharzadeh@ipm.ir}
\address{M. Tousi, Department of Mathematics, Shahid Beheshti University, G. C.,
Tehran, Iran and School of Mathematics, Institute for Research in Fundamental Sciences (IPM),
P.O. Box 19395-5746, Tehran, Iran.}
\email{mtousi@ipm.ir}
\subjclass[2000]{13C15, 13D30, 13D45.}

\keywords{Grade of an ideal, non-Noetherian rings, torsion theory.\\This research was in part supported by a grant from IPM (No. 89130212)}

\begin{abstract}
In this paper we define and compare different types of the  notion of grade with respect to torsion theories over commutative rings which are not necessarily Noetherian. We do this by using Ext-modules, Koszul cohomology modules, \v{C}ech  and local cohomology modules. An application of these results is
given.
\end{abstract}

\maketitle

\section{Introduction}
There are many definitions of almost zero modules over non-Noetherian rings (see Example \ref{al} below). These classes of almost zero modules are closed under taking submodules, quotients, extensions and closed under taking directed limits. In the literature, such  classes of modules are called \textit{torsion theories}. As references for torsion theory, we refer the reader to \cite {Cal} and  \cite{St}. Our motivation comes from ~\cite[Proposition 1.3]{RSS}, where a connection between almost zero modules and the Monomial Conjecture has been given.

Our aim in this paper is to generalize the theory of grade in two directions. First, classical definitions of grade define it in terms of the vanishing of certain sequences of functors, while the definitions here require that the values of these functors lie in 	torsion theories. The second generalization is from Noetherian to non-Noetherian rings. Grade over not necessarily Noetherian rings was first defined by Barger \cite{B} and Hochster \cite{Ho} (see also \cite{A} and \cite{AT2}). In this paper, we extend the main result of \cite[Section 2]{AT1} by dropping of the Noetherian assumption.

Throughout this paper,  $R$ is a commutative (not necessarily Noetherian) ring, $M$  an $R$-module, $\fa$  a finitely generated ideal of $R$ and $\mathfrak{T}$ a torsion theory of $R$-modules.

We say that a sequence $\underline{x}:=x_{1}, \ldots, x_{r}$ of elements of $R$ is  a weak $M$-regular sequence with respect to $\mathfrak{T}$, if $((x_{1},\ldots, x_{i-1})M:_{M}x_{i})/(x_{1},\ldots,x_{i-1})M\in\mathfrak{T}$  for all $i=1,\ldots,r$. We denote the supremum of the lengths of all weak $M$-regular sequences with respect to $\mathfrak{T}$ which are contained in $\fa$ by $\mathfrak{T}-\cgrade_{R}(\fa,M)$. By using Ext-modules, Koszul cohomology modules, \v{C}ech  and local cohomology modules, we define four types of grade  and we denote these by $\mathfrak{T}-\Egrade_R(\fa,M)$, $\mathfrak{T}-\Kgrade_{R}(\fa,M)$, $\mathfrak{T}-\Cgrade_{R}(\fa,M)$ and $\mathfrak{T}-\Hgrade_{R} (\fa,M)$, respectively. These terms are explained in Definition~\ref{def} below.   It is worth pointing out that different types of the usual notion of grade (over general commutative rings) correspond  to the case $\mathfrak{T}=0$. The following is our first main result (see Theorem~\ref{first} and Proposition~\ref{reg} below):

\begin{theorem}\label{main1}
Let $\mathfrak{T}$ be a  torsion theory of $R$-modules, $\fa$ a finitely generated ideal of $R$ and $M$ an $R$-module. The following holds:
\begin{enumerate}
\item[$\mathrm{(i)}$] $\mathfrak{T}-\Cgrade_{R}(\fa,M)=\mathfrak{T}-\Kgrade_{R}(\fa,M)$ and if $R$ is coherent, then $$\mathfrak{T}-\Kgrade_{R}(\fa,M)=\mathfrak{T}-\Egrade_R(\fa,M)=\mathfrak{T}-\Hgrade_{R}(\fa,M).$$
\item[$\mathrm{(ii)}$]  $\mathfrak{T}-\cgrade_{R}(\fa,M)\leq\mathfrak{T}-\Kgrade_{R}(\fa,M)$.
\end{enumerate}
\end{theorem}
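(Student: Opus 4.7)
\smallskip
\noindent\emph{Proof plan.} The plan for (i) is to separate the two families of equalities. First, to prove $\mathfrak{T}-\Cgrade_R(\fa,M)=\mathfrak{T}-\Kgrade_R(\fa,M)$, I will fix generators $\uy = y_1,\ldots,y_s$ of $\fa$ and use the standard presentation of the \v{C}ech complex as a filtered colimit of Koszul cochain complexes, $\check{C}^{\bullet}_{\uy}(M)=\varinjlim_t K^{\bullet}(\uy^t;M)$. Since cohomology commutes with directed colimits and $\mathfrak{T}$ is closed under direct limits, submodules and quotients, the least index at which cohomology first escapes $\mathfrak{T}$ coincides on the two sides (with a routine compatibility check that the Koszul-grade notion modulo $\mathfrak{T}$ is independent of the powers $\uy^t$). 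For the coherent-case equalities $\mathfrak{T}-\Kgrade=\mathfrak{T}-\Egrade=\mathfrak{T}-\Hgrade$, the key input I will use is that $R/\fa^t$ is finitely presented for each $t$ (coherence plus finite generation of $\fa$), admitting a resolution by finitely generated free modules. This makes $\Ext^i_R(R/\fa^t,M)$ interact well with direct limits, so the classical identity $H^i_\fa(M)=\varinjlim_t \Ext^i_R(R/\fa^t,M)$, combined with the \v{C}ech--local cohomology identification and direct-limit closure of $\mathfrak{T}$, will yield the three equalities.

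For (ii), I will induct on the length $r$ of a weak $M$-regular sequence $x_1,\ldots,x_r\in\fa$ with respect to $\mathfrak{T}$; the case $r=0$ is vacuous. In the inductive step, set $K:=(0:_M x_1)\in\mathfrak{T}$. Because $K^{\bullet}(\uy;R)$ is a finite complex of finitely generated free $R$-modules, each term of $K^{\bullet}(\uy;K)$ is a finite direct sum of copies of $K$ and hence lies in $\mathfrak{T}$, so $H^i(\uy;K)\in\mathfrak{T}$ for every $i$ (subquotients of $\mathfrak{T}$-modules). The induction hypothesis applied to $M/x_1 M$ with the sequence $x_2,\ldots,x_r$ gives $H^i(\uy;M/x_1 M)\in\mathfrak{T}$ for $i\leq r-2$. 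I then split the four-term exact sequence $0\to K\to M\xrightarrow{x_1}M\to M/x_1 M\to 0$ as
\[
0\to K\to M\xrightarrow{q}M/K\to 0 \qquad\text{and}\qquad 0\to M/K\xrightarrow{x_1\cdot}M\to M/x_1 M\to 0
\]
(using $M/K\cong x_1 M$), and take the associated long exact sequences in Koszul cohomology.

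The crucial observation will be that the composition
\[
H^i(\uy;M)\xrightarrow{q_*}H^i(\uy;M/K)\xrightarrow{(x_1\cdot)_*}H^i(\uy;M)
\]
equals multiplication by $x_1$ on $H^i(\uy;M)$, which vanishes because $x_1\in\fa$ annihilates Koszul cohomology. By exactness of the second long exact sequence, $\ker\bigl((x_1\cdot)_*\bigr)$ is a quotient of $H^{i-1}(\uy;M/x_1 M)$, which lies in $\mathfrak{T}$ whenever $i\leq r-1$, so the image of $q_*$ lies in $\mathfrak{T}$; combined with $\ker(q_*)\in\mathfrak{T}$ (a quotient of $H^i(\uy;K)\in\mathfrak{T}$ by the first long exact sequence), closure of $\mathfrak{T}$ under extensions forces $H^i(\uy;M)\in\mathfrak{T}$ for all $i\leq r-1$, that is, $\mathfrak{T}-\Kgrade_R(\fa,M)\geq r$. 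I expect the main technical hurdle to be the top degree $i=r-1$, where no direct control on $H^{r-1}(\uy;M/x_1 M)$ is available from the induction; the factor-through-$\fa$ observation above is precisely what resolves this. A conceptually cleaner alternative would be to pass to the Serre quotient $R\text{-Mod}/\mathfrak{T}$, in which weak regularity becomes genuine regularity and (ii) reduces at once to the classical Koszul-cohomology statement.
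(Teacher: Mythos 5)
Your treatment of (ii) is sound and is in fact close to the paper's own Proposition \ref{reg}: the paper likewise splits $0\to(0:_M x)\to M\xrightarrow{x} M\to M/xM\to 0$ into two short exact sequences through $M/(0:_M x)$, and the decisive step in both is exactly your factor-through-$\fa$ observation, i.e.\ that $H^i(\uy,f)\circ H^i(\uy,\pi)=x\cdot\mathrm{id}=0$ because $\fa$ annihilates Koszul cohomology. (The paper actually establishes the sharper equality $\mathfrak{T}-\Kgrade_R(\fa,M)=\mathfrak{T}-\Kgrade_R(\fa,M/xM)+1$; your one-sided induction extracts precisely the inequality needed for the theorem.) Your closing remark about passing to the Serre quotient $R\textbf{-Mod}/\mathfrak{T}$ is a legitimate alternative viewpoint, though one then has to explain why Koszul/\v{C}ech/Ext cohomology computed in $R\textbf{-Mod}$ and then pushed to the quotient agrees with what one would compute intrinsically.

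Part (i) has genuine gaps, and your route there is materially different from the paper's. For the equality $\mathfrak{T}-\Cgrade_R(\fa,M)=\mathfrak{T}-\Kgrade_R(\fa,M)$, you invoke $\check{C}^{\bullet}_{\uy}(M)=\varinjlim_t \mathbb{K}^{\bullet}(\uy^t;M)$ and close with a ``routine compatibility check that the Koszul-grade notion modulo $\mathfrak{T}$ is independent of the powers $\uy^t$.'' That check is not routine: it asserts precisely that $\mathfrak{T}-\Kgrade$ is invariant under replacing $\uy$ by $\uy^t$, which is a power-/radical-invariance statement for Koszul grade, and in the classical ($\mathfrak{T}=0$) case this already requires a nontrivial argument (e.g.\ via an acyclicity lemma or by identifying Koszul grade with depth). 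It is essentially a special case of the theorem being proved. Moreover, the colimit description is a one-directional tool: knowing the individual $H^i(\uy^t;M)$ lie in $\mathfrak{T}$ forces the colimit in; but the colimit lying in $\mathfrak{T}$ gives no control on any single $H^i(\uy;M)$, so the ``least index escapes $\mathfrak{T}$'' claim does not transfer in both directions as stated. The paper avoids this entirely by proving a torsion-theoretic Peskine--Szpiro acyclicity lemma (Lemma \ref{PS}) and applying it twice, once to a truncated \v{C}ech complex with the Koszul cohomology functors and once to a truncated Koszul complex with the \v{C}ech cohomology functors; the cross-wise use of the two connected sequences of functors is the engine that produces both inequalities simultaneously.

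For the coherent-case equalities, your plan appeals to a ``\v{C}ech--local cohomology identification,'' i.e.\ $H^i_{\fa}(M)\cong H^i_{\uy}(M)$. The paper explicitly remarks, citing Schenzel, that these functors are \emph{not} the same over general (even coherent) rings, so this step is not available. And the direct-limit observation $H^i_{\fa}(M)=\varinjlim_n\Ext^i_R(R/\fa^n,M)$ only yields one inequality ($\mathfrak{T}-\Egrade\le\mathfrak{T}-\Hgrade$), and even that requires knowing $\mathfrak{T}-\Egrade_R(\fa^n,M)=\mathfrak{T}-\Egrade_R(\fa,M)$ for all $n$, which the paper gets by routing through the already-established $\Kgrade=\Cgrade=\Egrade$ and the radical-invariance of \v{C}ech cohomology. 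The reverse inequality $\mathfrak{T}-\Hgrade\le\mathfrak{T}-\Egrade$ is again obtained by the acyclicity lemma, not by a colimit argument. In short: for (i) you need to replace the colimit heuristic by an acyclicity-lemma style argument (or else first prove, by independent means, the power-invariance of $\mathfrak{T}$-Koszul grade), and you must drop the \v{C}ech--local cohomology identification.
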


Our methods to prove Theorem \ref{main1} are inspired by \cite[Proposition 1.1.1]{Str} and ideas from \cite{Str},
Chapters 5 and 6. The first interest in Theorem \ref{main1} (at least to the authors) comes from its application in \cite[Proposition 5.4]{AS}. Also, as an immediate application of Theorem \ref{main1}, we give an affirmative answer to \cite[Question 4.8]{AS}. More precisely, let $T$ be an algebra equipped with a value map $v$ over a local ring $(R,\fm)$ and let $M$ be an almost Cohen-Macaulay $T$-module (see Definition \ref{almost}). Suppose $\underline{x}:=x_{1},\ldots,x_{r}$ is a generating set for $\fm$ and denote the $i$-th  \v{C}ech  cohomology module of $M$ with respect to  $\underline{x}$ by $H^{i}_{\underline{x}}(M)$.
Then, in Corollary \ref{ap}, we show that $H^{\dim R}_{\underline{x}}(M)$ is not almost zero. This result extends one  part of \cite[Proposition 4.7]{AS} when it concerned with algebras rather than modules.

In Theorem \ref{main1} $\mathrm{(ii)}$, the equality $\mathfrak{T}-\cgrade_{R}(\fa,M)=\mathfrak{T}-\Kgrade_{R}(\fa,M)$ does not true for general modules, even if $\mathfrak{T}=\{0\}$ (see Example~\ref{fin} (ii), below).
There is a natural question. Under what conditions does the equality hold? Our second aim is to find such conditions both over modules and torsion theories. We consider the \textit{half centered} torsion theories.
Half centered torsion theories are introduced in \cite{Cah1}. Recall that a torsion theory $\mathfrak{T}$ is  called half centered if $M\in \mathfrak{T}$ whenever 
$R/ \fp\in \mathfrak{T}$ for all $\fp\in\wAss_R(M)$, where $\wAss_R(M)$ is the set of all weakly associated prime ideals of $M$. In view of this, half centered torsion theories behave well with respect to  certain cycle submodules of a given module. This allow us to use the induction. In order to perform the inductive step, one assumes the induction hypothesis. So, we need to use the notion of \textit{weakly Laskerian} modules.
 Recall from \cite{DM} that an $R$-module $M$ is called weakly Laskerian, if each quotient of $M$ has finitely many weakly associated prime ideals. Concerning the grade of ideals with respect to half centered torsion theories,
the following is our second main result:

\begin{theorem}\label{main2} (see Theorem~\ref{second})
Let $\mathfrak{T}$ be a  half centered torsion theory, $\fa$ a finitely generated ideal of $R$ and $M$ a weakly  Laskerian $R$-module.
Then $\mathfrak{T}-\cgrade_{R}(\fa,M)=\mathfrak{T}-\Kgrade_{R}(\fa,M)$.
\end{theorem}

\section{Different definitions of grade with respect to torsion theories}

 We begin our work in this paper  by setting notation and recalling some notions.

\begin{definition}\label{tor}
A family $\mathfrak{T}$ of $R$-modules is called a torsion theory, if  $\mathfrak{T}$ is closed under taking submodules,
quotients, extensions and closed under taking directed limits.
\end{definition}

One can easily find that the following classes of almost zero modules are torsion theories.

\begin{example}\label{al}
Let  $R^{+}$ be the integral closure of a Noetherian local domain $(R,\fm)$ in the algebraic closure of its fraction field. It is worth to note that $R^{+}$ is not a Noetherian ring, if $R$ is not a field. Let $M$ be an $R^+$-module.

$(i)$: This is well-known that there exists  a valuation map $v : R^+\longrightarrow\mathbb{Q}\bigcup \{\infty\}$.  In view of \cite[Definition 1.1]{RSS}, an $R^+$-module $M$ is called almost zero with respect to $v$, if for all $m\in M$ and  all $\epsilon>0$ there exists an element $a\in R^{+}$ with $v(a)<\epsilon$ such that $am=0$. We denote for the class of almost zero $R^{+}$-modules with respect to $v$ by $\mathfrak{T}_v$. One can easily find that $\mathfrak{T}_v$ is a torsion theory.

$(ii)$: In view of  \cite[Definition 2.2]{GR}, $M$ is called almost zero with respect to  a maximal ideal $\fm_{R^{+}}$ of $R^{+}$, if $\fm_{R^{+}}M=0$.

$(iii)$: Let $x$ be an  element in the Jacobson radical of $R^{+}$. In view of \cite[Section 2]{F}, $M$ is called almost zero with respect to $x$, if $x^{1/n}$ kills $M$ for arbitrarily large $n$.
\end{example}

\begin{definition}\label{seq} (see \cite[Definition 1.2]{RSS}) Let $\mathfrak{T}$ be a torsion theory and $M$ an $R$-module.
A sequence $\underline{x}:=x_{1}, \ldots, x_{r}$ of elements of $R$ is called a weak $M$-regular sequence with respect to
$\mathfrak{T}$, if $$((x_{1},\ldots, x_{i-1})M:_{M} x_{i})/(x_{1},\ldots,x_{i-1})M\in\mathfrak{T}$$for all $i=1,\ldots,r$.
\end{definition}

Let $\underline{x}:=x_{1},\ldots,x_{r}$ be a finite sequence  of elements of $R$ and let $\check{C}(\underline{x};M)$ be the \v{C}ech complex
of $M$ with respect to $\underline{x}$, i.e., $\check{C}(\underline{x};M)$ is  as follows:
$$
\begin{CD}
 0 @>>> M @>>>\bigoplus_{1\leq i\leq r} M_{x_{i}} @>>>\cdots
@>>>M_{x_1\ldots x_r}@>>> 0.
\end{CD}
$$
We denote the $i$-th cohomology module of $\check{C}(\underline{x};M)$, by $H^{i}_{\underline{x}}(M)$. For an ideal $\fa$ of $R$, by $H^i_{\fa}(M)$, we mean $\underset{n\in\mathbb{N}}{\varinjlim}\Ext^{i}_{R}(R/\fa^n,M)$.

\begin{remark}
Let $\fa$ be a finitely generated ideal of  $R$ with  a generating set $\underline{x}:=x_{1},\ldots,x_{r}$. It is worth to recall from \cite[Theorem 1.1]{Sch} that $H^i_{\fa}(-)$ and $H^{i}_{\underline{x}}(-)$ are not necessarily the same.
\end{remark}

Let $\underline{x}=x_{1},\ldots,x_{r}$ be a finite sequence  of elements of $R$. For an $R$-module $M$, $\mathbb{K}_{\bullet}(\underline{x})$ stands for the Koszul complex of $R$ with respect to $\underline{x}$. By $\mathbb{K}^{\bullet}(\underline{x}; M)$, we mean $\Hom_R(\mathbb{K}_{\bullet}(\underline{x}), M)$. We denote  the $i$-th cohomology   module of  $\mathbb{K}^{\bullet}(\underline{x}; M)$,  by $H^{i}(\underline{x}; M)$. The symbol $\mathbb{N}_{0}$ will denote the set of nonnegative integers.

\begin{lemma}
Let $\fa$ be a finitely generated ideal of  $R$ and $M$ an $R$-module. Suppose
that $\underline{x}:=x_{1},\ldots,x_{r}$ is a generating set for
$\fa$. Then $\inf\{i\in\mathbb{N}_{0}|H^{i}(\mathbb{K}^{\bullet}(\underline{x},M))\notin
\mathfrak{T}\}$  and $\inf\{i\in\mathbb{N}_{0}|H^{i}_{\underline{x}}(M)\notin \mathfrak{T}\}$
do not depend on the choice of the generating sets for $\fa$.
\end{lemma}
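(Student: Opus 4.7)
The plan is to reduce the lemma to the following single assertion, from which independence for both invariants follows: appending an element $y\in\fa$ to a finite generating set $\underline{x}$ of $\fa$ changes neither $\inf\{i:H^i(\mathbb{K}^{\bullet}(\underline{x};M))\notin\mathfrak{T}\}$ nor $\inf\{i:H^i_{\underline{x}}(M)\notin\mathfrak{T}\}$. Given this, two generating sets $\underline{x}$ and $\underline{x}'$ of $\fa$ are compared via the common enlargement $\underline{x}\cup\underline{x}'$, obtained from each by finitely many single-element extensions, with every added element lying in $\fa$.

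For the Koszul invariant I would use the mapping-cone factorization $\mathbb{K}_\bullet(\underline{x},y)=\mathbb{K}_\bullet(\underline{x})\otimes\mathbb{K}_\bullet(y)$, which produces a short exact sequence of complexes and thence a long exact sequence with connecting maps $\pm y$ on $H^{\bullet}(\underline{x};M)$. Because each generator $x_j$ annihilates every $H^{i}(\underline{x};M)$ (via the standard chain homotopy between multiplication by $x_j$ and $0$ on $\mathbb{K}_\bullet(\underline{x})$), the element $y\in(\underline{x})$ acts as zero, and the long exact sequence collapses to
\[
0\to H^{i}(\underline{x};M)\to H^{i}(\underline{x},y;M)\to H^{i-1}(\underline{x};M)\to 0.
\]
Writing $c$ for the Koszul invariant of $\underline{x}$, closure of $\mathfrak{T}$ under extensions yields $H^{i}(\underline{x},y;M)\in\mathfrak{T}$ for $i<c$; closure under submodules forces $H^{c}(\underline{x},y;M)\notin\mathfrak{T}$ (otherwise its subobject $H^{c}(\underline{x};M)$ would lie in $\mathfrak{T}$, contradicting the definition of $c$). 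Hence the invariant is unchanged.

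For the \v{C}ech invariant I would use $\check{C}(\underline{x},y;M)=\check{C}(\underline{x};M)\otimes_R\check{C}(y;R)$ together with the short exact sequence $0\to R\to\check{C}(y;R)\to R_y[-1]\to 0$, whose entries are flat $R$-modules. Tensoring with $\check{C}(\underline{x};M)$ and using $\check{C}(\underline{x};M)\otimes_R R_y\cong\check{C}(\underline{x};M_y)$ yields
\[
0\to\check{C}(\underline{x};M)\to\check{C}(\underline{x},y;M)\to\check{C}(\underline{x};M_y)[-1]\to 0.
\]
The crucial point is that $y\in(\underline{x})$ gives $(\underline{x})R_y=R_y$, and I would argue directly that this forces $H^{i}_{\underline{x}}(M_y)=0$ for every $i$: localizing $\check{C}(\underline{x};M_y)$ at any $x_j$ makes one tensor factor $[M_y\to (M_y)_{x_j}]$ an identity and hence split contractible, so each $x_j$ acts locally nilpotently on $H^{i}_{\underline{x}}(M_y)$; writing $1=\sum a_j x_j$ in $R_y$, any fixed class is annihilated by $(\sum a_j x_j)^{N}$ for $N$ large, via a pigeonhole argument on monomial exponents. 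The long exact sequence then collapses to genuine isomorphisms $H^{i}_{\underline{x}}(M)\xrightarrow{\sim}H^{i}_{\underline{x},y}(M)$, and the \v{C}ech invariant is preserved on the nose.

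The main obstacle is the vanishing $H^{i}_{\underline{x}}(M_y)=0$: without Noetherianness one cannot invoke support-theoretic vanishing of local cohomology, nor identify \v{C}ech cohomology with $H^i_{\fa}$ (as the preceding remark warns), so the argument must be carried out directly from the \v{C}ech definition via the local-nilpotence/pigeonhole route above. Once this is in hand, the remainder of both arguments is a routine long-exact-sequence chase using only the four closure axioms of a torsion theory.
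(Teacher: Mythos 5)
Your argument is correct, and it takes a genuinely different route from the paper. The paper disposes of the lemma essentially by citation: the Koszul case is referred to \cite[Lemma~2.5]{AT1} (which, as the paper notes, does not use Noetherianness), and the \v{C}ech case is referred to the invariance result $H^{i}_{\underline{x}}(M)\cong H^{i}_{\underline{y}}(M)$ for $\rad(\underline{x}R)=\rad(\underline{y}R)$ from \cite[Proposition~2.1(e)]{HM}. You instead reduce to adjoining one element $y\in\fa$ and argue directly from the closure axioms of $\mathfrak{T}$. For the Koszul invariant your mapping-cone long exact sequence plus $y\cdot H^{\bullet}(\underline{x};M)=0$ is precisely the standard mechanism (and is essentially what the cited lemma does); for the \v{C}ech invariant you prove the weaker but sufficient fact $H^i_{\underline{x}}(M_y)=0$ for $y\in(\underline{x})$ by the localization-plus-pigeonhole argument, whereas \cite{HM} establishes the stronger radical-invariance (which the paper also needs later, in the proof of Theorem~\ref{first}(iii)). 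Thus the paper's route buys a stronger invariance statement and brevity by citation, while yours is self-contained and exposes exactly which torsion-theory axioms are used where.

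Two small inaccuracies, neither of which affects the conclusion. First, the short exact sequence of Koszul cohomologies runs the other way: with $y$ acting as zero the long exact sequence breaks into
\[
0\to H^{i-1}(\underline{x};M)\to H^{i}(\underline{x},y;M)\to H^{i}(\underline{x};M)\to 0,
\]
so at $i=c$ you conclude $H^{c}(\underline{x},y;M)\notin\mathfrak{T}$ by closure under quotients (rather than submodules); your reasoning still goes through since torsion theories are closed under both. Second, the short exact sequence of complexes in the \v{C}ech case should be
\[
0\to R_y[-1]\to \check{C}(y;R)\to R\to 0,
\]
not the reverse, since $R_y$ sits in top degree and is therefore the subcomplex; after tensoring with the flat complex $\check{C}(\underline{x};M)$ this yields
\[
0\to\check{C}(\underline{x};M_y)[-1]\to\check{C}(\underline{x},y;M)\to\check{C}(\underline{x};M)\to 0,
\]
and the rest of your argument ($H^i_{\underline{x}}(M_y)=0$ because $(\underline{x})R_y=R_y$ and each $x_j$ acts locally nilpotently on $H^i_{\underline{x}}(M_y)$) is sound and correctly identified as the crux, where the naive Noetherian support argument is unavailable.
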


\begin{proof}  The former case is in \cite[Lemma 2.5]{AT1}. Note that in that
argument $R$ does not need to be Noetherian. To prove the
later,  note that $H^{i}_{\underline{x}}(M)\cong
H^{i}_{\underline{y}}(M)$, where $\underline{y}$ is a finite
sequence with $\rad(\underline{x}R)=\rad(\underline{y}R)$ (see \cite[Proposition 2.1(e)]{HM}). \end{proof}

\begin{definition}\label{def}
Let $M$ be an $R$-module and $\fa$ a finitely generated ideal of $R$ with  a generating set $\underline{x}:=x_{1},\ldots,x_{r}$. We define \v{C}ech grade, Ext grade, Koszul grade, local cohomology grade and classical grade of $\fa$ on $M$ with respect to $\mathfrak{T}$, respectively, by the following forms:
\[\begin{array}{ll}\mathrm{(i)} &\mathfrak{T}-\Cgrade_{R}(\fa,M):=\inf\{i\in\mathbb{N}_{0}|H_{\underline{x}}^{i}(M)\notin \mathfrak{T}\};\\
\mathrm{(ii)}&\mathfrak{T}-\Egrade_{R}(\fa,M):=\inf\{i\in\mathbb{N}_{0}|\Ext^{i}_R(R/\fa,N)\notin \mathfrak{T}\};\\
\mathrm{(iii)} &\mathfrak{T}-\Kgrade_{R}(\fa,M):=\inf\{i\in\mathbb{N}_{0}|H^{i}(\mathbb{K}^{\bullet}(\underline{x},M))\notin\mathfrak{T}\};\\
\mathrm{(iv)}&\mathfrak{T}-\Hgrade_{R}(\fa,M):=\inf\{i\in\mathbb{N}_{0}|H_{\fa}^{i}(M)\notin \mathfrak{T}\};\\
\mathrm{(v)}&\mathfrak{T}-\cgrade_{R}(\fa,M):=\sup \{\ell\in \mathbb{N}_{0}|\textit{there exists a weak M-sequence
 in } \fa \textit{ with respect } \\& \textit{ to }\mathfrak{T}\textit{ of length } \ell\}.\end{array}\]
Here $\inf$ and $\sup$ are formed in $\mathbb{Z} \cup \{\pm\infty\}$
with the convention that $\inf \emptyset=+ \infty$ and $\sup
\emptyset=-\infty$.
\end{definition}

\section{Proof of Theorem~\ref{main1}}

Theorem~\ref{first} is  our first main  result. To prove it, we need a couple of lemmas.

\begin{lemma}\label{zero}
Let $\mathfrak{T}$ be a  torsion theory,
$\fa$  a finitely generated ideal of $R$ and $M$ an $R$-module. Then
the following assertions are equivalent:
\begin{enumerate}
\item[$\mathrm{(i)}$]$\mathfrak{T}-\Egrade_{R}(\fa,M)=0$;
\item[$\mathrm{(ii)}$]$\mathfrak{T}-\Hgrade_{R}(\fa,M)=0$;
\item[$\mathrm{(iii)}$]$\mathfrak{T}-\Kgrade_{R}(\fa,M)=0$;
\item[$\mathrm{(iv)}$]$\mathfrak{T}-\Cgrade_{R}(\fa,M)=0$.
\end{enumerate}
\end{lemma}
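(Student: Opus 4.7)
The plan is to observe that the four conditions really involve only two different $R$-submodules of $M$, so the lemma reduces to comparing when these two submodules belong to $\mathfrak{T}$. First I would unfold the zeroth term in each case. The Koszul cochain complex has $\mathbb{K}^0=M$ and differential $m\mapsto(x_1m,\dots,x_rm)$, so
$$H^0(\mathbb{K}^{\bullet}(\underline{x},M))=(0:_M \fa)=\Hom_R(R/\fa,M)=\Ext^{0}_R(R/\fa,M),$$
making (i) and (iii) literally the same statement. On the other side,
$$H^{0}_{\underline{x}}(M)=\Ker\Bigl(M\to \bigoplus_{i}M_{x_{i}}\Bigr)=\Gamma_{\fa}(M)=\bigcup_{n}(0:_M\fa^n)=\varinjlim_{n}\Hom_R(R/\fa^n,M)=H^{0}_{\fa}(M),$$
where the identification with $\Gamma_{\fa}(M)$ uses that $\fa=(x_1,\dots,x_r)$ is finitely generated; this makes (ii) and (iv) the same statement as well.

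The actual content is then the equivalence
$$(0:_M \fa)\in\mathfrak{T}\qquad\Longleftrightarrow\qquad \Gamma_{\fa}(M)\in\mathfrak{T}.$$
The implication $\Leftarrow$ is immediate from closure of $\mathfrak{T}$ under submodules via $(0:_M\fa)\subseteq \Gamma_{\fa}(M)$. For $\Rightarrow$ I would prove by induction on $n$ that $(0:_M \fa^n)\in\mathfrak{T}$ for every $n\ge 1$, with the base case $n=1$ being the hypothesis. Then directed-limit closure gives $\Gamma_{\fa}(M)=\varinjlim_{n}(0:_M\fa^n)\in\mathfrak{T}$.

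The inductive step uses the evaluation homomorphism $\phi:(0:_M\fa^{n})\to \Hom_R(\fa^{n-1},(0:_M\fa))$ defined by $\phi(m)(a)=am$; one checks that $\Ker\phi=(0:_M\fa^{n-1})$. Since $\fa$ is finitely generated, so is $\fa^{n-1}$, say by $k$ elements, and evaluation on those generators produces an injection $\Hom_R(\fa^{n-1},(0:_M \fa))\hookrightarrow (0:_M\fa)^{k}$. Finite direct sums lie in $\mathfrak{T}$ by iterated extensions, hence $\im\phi\in\mathfrak{T}$ by submodule closure, and the short exact sequence
$$0\longrightarrow (0:_M\fa^{n-1})\longrightarrow (0:_M\fa^{n})\longrightarrow \im\phi\longrightarrow 0$$
together with extension closure completes the induction.

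The main obstacle is exactly this inductive step: one truly needs that each $\fa^{n-1}$ remains finitely generated so that $\Hom_R(\fa^{n-1},-)$ preserves membership in $\mathfrak{T}$ via a finite product embedding; without finite generation of $\fa$ this mechanism collapses. Everything else is cohomological bookkeeping and a direct application of the defining closure properties (submodules, quotients, extensions, directed limits) of a torsion theory.
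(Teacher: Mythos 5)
Your proof is correct and takes essentially the same route as the paper: both reduce the lemma to the single equivalence $(0:_M\fa)\in\mathfrak{T}\Leftrightarrow\Gamma_{\fa}(M)\in\mathfrak{T}$ (using $H^0(\underline{x},M)=(0:_M\fa)$ and $H^0_{\underline{x}}(M)=H^0_{\fa}(M)$), handle one direction by submodule closure, and handle the other by induction on $n$ that $(0:_M\fa^n)\in\mathfrak{T}$ followed by direct-limit closure. The only cosmetic difference is in the inductive step: the paper maps $(0:_M\fa^{n+1})\to\bigoplus(0:_M\fa^{n})$ via a generating set of $\fa^{n}$, whereas you factor through $\Hom_R(\fa^{n-1},(0:_M\fa))$ and then evaluate on generators of $\fa^{n-1}$ to land in a finite power of $(0:_M\fa)$; both exploit finite generation of the powers of $\fa$ together with closure under finite sums, submodules, and extensions in the same way.
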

\begin{proof} $(i)\Rightarrow  (ii)$: Clearly, $\Hom_{R}(R/\fa,M)\notin
\mathfrak{T}$. Also, note that $(0:_{M}\fa)\subseteq H^0_{\fa}(M)$. Then in view of $\Hom_{R}(R/\fa,M)\cong (0:_{M}\fa)$, we see that
$H^0_{\fa}(M)\notin \mathfrak{T}$. So
$\mathfrak{T}-\Hgrade_{R}(\fa,M)=0$.

$(ii)\Rightarrow (i)$: Suppose on the contrary that
$\mathfrak{T}-\Egrade_{R}(\fa,M)>0$ and look for a contradiction. By induction on $n$, we show that
$(0:_{M}\fa^{n})\in \mathfrak{T}$. The case $n=1$ follows from
$\mathfrak{T}-\Egrade_{R}(\fa,M)>0$. Suppose, inductively,  we have
established the result for $n$. Note that $\fa^{n}$ is finitely
generated, let $\fa^{n}=(x_{1},\ldots,x_{\ell})R$. The assignment $m\mapsto f(m):=(x_{1}m,\ldots,x_{\ell}m)$ defines
the following $R$-homomorphism:
$$f:(0:_{M}\fa^{n+1})\longrightarrow\bigoplus_{i=1}^{\ell}(0:_{M}\fa^{n}).$$ Consider the following exact
sequence$$
\begin{CD}
0 @>>> (0:_{M}\fa^{n}) @>>>(0:_{M}\fa^{n+1}) @>>> \im f@>>>0.
\end{CD}
$$
It yields that $(0:_{M}\fa^{n+1})\in \mathfrak{T}$. Therefore, $H^0_{\fa}(M)=\bigcup_{n\in \mathbb{N}}
(0:_{M}\fa^{n}) \in \mathfrak{T}$.

$(iii) \Leftrightarrow (i)$ and $(iv) \Leftrightarrow (ii)$ are
trivial.   Note that if $\underline{x}=x_1,\ldots,x_s$  is a generating  set for $\fa$,
then $H^{0}(\underline{x},M)=(0:_M\fa)$ and  $H^0_{\underline{x}}(M)=H^{0}_{\fa}(M)$.  \end{proof}

\begin{definition}
Let $\{F^{i}\}_{i\geq0}$ be a negative strongly  connected sequence of covariant
functors (see \cite[Page 212]{Rot} and \cite[Definition 1.3.1]{BS}).
\begin{enumerate}
\item[$\mathrm{(i)}$]We say that $\{F^{i}\}_{i\geq0}$ has \textit{$\mathfrak{T}$-restriction} property,
if $F^{i}(M)\in \mathfrak{T}$ for all $M\in \mathfrak{T}$ and $i\geq0$.
\item[$\mathrm{(ii)}$] Grade of an $R$-module $M$  with respect to $\{F^{i}\}_{i\geq0}$
is defined by $$F^{-}(M):=\inf\{i\in\mathbb{N}\cup\{0\}|F^{i}(M)\notin\mathfrak{T}\}.$$
\end{enumerate}
\end{definition}

In the absence of the Noetherian assumption on the base ring we  use the following acyclicity Lemma which is motivated by
a famous Lemma of Peskine and Szpiro \cite{PS}.
\begin{lemma}\label{PS}
Let $\{F^{i}\}_{i\geq0}$ be a negative strongly  connected sequence of covariant
functors with $\mathfrak{T}$-restriction property. Let$$
\begin{CD}
\mathrm{C}_{\bullet}:0 @>{d_{s+1}}>> C_{s}@>{d_{s}}>>\cdots @>>>
C_{1} @>{d_{1}}>> C_{0}  @>{d_{0}}>>0
\end{CD}
$$
be a complex of
$R$-modules. If for all $1\leq i\leq s$

\begin{enumerate}
\item[$\mathrm{(i)}$] $F^{-}(C_{i})\geq i$
\item[$\mathrm{(ii)}$]either $H_{i}(\mathrm{C}_{\bullet})\in \mathfrak{T}$
or $F^{-}(H_{i}(\mathrm{C}_{\bullet}))=0$,
\end{enumerate}
then $H_{i}(\mathrm{C}_{\bullet})\in \mathfrak{T}$ for all $1\leq i\leq s$.
\end{lemma}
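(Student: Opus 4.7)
The plan is to proceed by descending induction on $i$ from $i=s$ down to $i=1$, exploiting the two canonical short exact sequences attached to the complex. Set $Z_j := \Ker d_j$ and $B_{j-1} := \im d_j$, so that for every $1\leq j\leq s$ one has
\[
0 \to Z_j \to C_j \to B_{j-1} \to 0, \qquad 0 \to B_j \to Z_j \to H_j(\mathrm{C}_\bullet) \to 0.
\]
The workhorse will be the long exact sequence of $\{F^i\}_{i\geq 0}$ applied to each. I will carry the strengthened joint inductive hypothesis at stage $i$: both $H_i(\mathrm{C}_\bullet)\in\mathfrak{T}$ and $F^-(B_{i-1})\geq i$. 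The quantitative second half is what feeds the next stage.

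\textbf{Base case $i=s$.} Here $B_s=0$, so $Z_s=H_s(\mathrm{C}_\bullet)$. Since $F^-(C_s)\geq s\geq 1$, we have $F^0(C_s)\in\mathfrak{T}$, and the first sequence gives $F^0(H_s(\mathrm{C}_\bullet))\hookrightarrow F^0(C_s)$. As $\mathfrak{T}$ is closed under submodules, $F^0(H_s(\mathrm{C}_\bullet))\in\mathfrak{T}$, that is $F^-(H_s(\mathrm{C}_\bullet))\geq 1>0$. Hypothesis (ii) then forces $H_s(\mathrm{C}_\bullet)\in\mathfrak{T}$, so the $\mathfrak{T}$-restriction property gives $F^k(Z_s)\in\mathfrak{T}$ for every $k$. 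Chasing the long exact sequence of $0\to Z_s\to C_s\to B_{s-1}\to 0$ sandwiches each $F^k(B_{s-1})$, for $k\leq s-1$, between a quotient of $F^k(C_s)\in\mathfrak{T}$ and a submodule of $F^{k+1}(Z_s)\in\mathfrak{T}$, whence $F^-(B_{s-1})\geq s$.

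\textbf{Inductive step.} Assume the claim at stages $i+1,\dots,s$; in particular $F^-(B_i)\geq i+1$, so $F^k(B_i)\in\mathfrak{T}$ for $0\leq k\leq i$. The first sequence at level $i$ yields $F^0(Z_i)\hookrightarrow F^0(C_i)\in\mathfrak{T}$, hence $F^0(Z_i)\in\mathfrak{T}$. The fragment
\[
F^0(B_i)\longrightarrow F^0(Z_i)\longrightarrow F^0(H_i(\mathrm{C}_\bullet))\longrightarrow F^1(B_i)
\]
of the long exact sequence of the second sequence exhibits $F^0(H_i(\mathrm{C}_\bullet))$ as an extension of a submodule of $F^1(B_i)\in\mathfrak{T}$ by a quotient of $F^0(Z_i)\in\mathfrak{T}$, so closure under extensions gives $F^0(H_i(\mathrm{C}_\bullet))\in\mathfrak{T}$. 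Thus $F^-(H_i(\mathrm{C}_\bullet))\geq 1$, and (ii) yields $H_i(\mathrm{C}_\bullet)\in\mathfrak{T}$. The restriction property upgrades this to $F^k(H_i(\mathrm{C}_\bullet))\in\mathfrak{T}$ for all $k$, and rerunning the two long exact sequences in turn gives $F^k(Z_i)\in\mathfrak{T}$ for $k\leq i$, then $F^k(B_{i-1})\in\mathfrak{T}$ for $k\leq i-1$; this closes the induction.

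\textbf{Main obstacle.} The real subtlety is pinning down the right strengthened inductive hypothesis. Propagating only the membership $H_j\in\mathfrak{T}$ is insufficient, since each stage requires quantitative control of $F^\bullet(B_{i-1})$ in several degrees, and $B_{i-1}\in\mathfrak{T}$ itself is too strong to expect. The mechanism that makes everything run is condition (ii): it converts the soft bound $F^-(H_i)\geq 1$ — all one can see directly from the long exact sequences — into the hard conclusion $H_i\in\mathfrak{T}$, at which point the $\mathfrak{T}$-restriction property shifts the quantitative grade bound one step down.
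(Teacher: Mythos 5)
Your proof is correct and follows essentially the same strategy as the paper: a decreasing induction carrying the strengthened hypothesis that, besides $H_i(\mathrm{C}_\bullet)\in\mathfrak{T}$, one has a quantitative grade bound on boundaries, using the two canonical short exact sequences and the $\mathfrak{T}$-restriction property to shift grades, with hypothesis (ii) supplying the key upgrade from $F^-(H_i)\geq 1$ to $H_i\in\mathfrak{T}$. The only cosmetic difference is that the paper tracks $T_i:=\coker d_{i+1}$ and its quotient $\overline{T}_{i+1}$, whereas you work directly with $B_i=\im d_{i+1}$; since $\overline{T}_{i+1}\cong B_i$, the two bookkeeping schemes coincide.
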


\begin{proof}
For each $1\leq i\leq s$, set $T_{i}:=\coker d_{i+1}$ and $K_{i}:=\ker d_{i}$.
By decreasing induction on $s$, we show that for each  $0<r\leq s$, we have
$H_{i}(\mathrm{C}_{\bullet})\in
\mathfrak{T}$ and $F^{-}(T_{i})\geq i$ for $0<r\leq i\leq s$.

Let $r=s$. We have $T_s=C_s$. So by our assumption, it is enough to show that
$H_{s}(\mathrm{C}_{\bullet})\in \mathfrak{T}$. Note that $F^0$ is left exact,
 because  $\{F^{i}\}_{i\geq0}$ is a negative strongly  connected sequence of
  covariant functors. This combining with the monomorphism
$H_{s}(\mathrm{C}_{\bullet})\hookrightarrow C_{s}$ implies the following monomorphism:
$$F^{0}(H_{s}(\mathrm{C}_{\bullet}))\hookrightarrow F^{0}(C_{s}).$$In view of  $F^{-}C_{s}\geq s>0$, we have
$F^{0}C_{s}\in \mathfrak{T}$. Hence $F^{0}H_{s}(\mathrm{C}_{\bullet})\in \mathfrak{T}$, and consequently
$F^{-}(H_{s}(\mathrm{C}_{\bullet}))>0$. By using the assumption of  Lemma, we have $H_{s}(\mathrm{C}_{\bullet})\in \mathfrak{T}$.

Now, suppose inductively, that $0<r< s$ and $H_{i}(\mathrm{C}_{\bullet})\in
\mathfrak{T}$ and $F^{-}(T_{i})\geq i$ for $0<r<i\leq s$. In order to use the
induction hypothesis, consider the following two exact sequences:
$$
\begin{CD}
0 @>>> H_{r+1}(\mathrm{C}_{\bullet}) @>>>T_
{r+1} @>>>C_{r}@>>>T_ {r}@>>>0
\\
0 @>>> H_{r+1}(\mathrm{C}_{\bullet}) @>>>T_
{r+1} @>>> K_{r}@>>>H_ {r}(\mathrm{C}_{\bullet})@>>>0.
\end{CD}
$$
So, we have two exact sequences:
$$
\begin{CD}
0 @>>> \overline{T}_{r+1} @>>> C_{r}@>>> T_ {r}@>>>0
\\
0 @>>> \overline{T}_
{r+1} @>>>K_{r} @>>> H_ {r}(\mathrm{C}_{\bullet})  @>>>0,
\end{CD}
$$
where $\overline{T}_ {r+1}$ is the quotient of $T_ {r+1}$ by
$H_{r+1}(\mathrm{C}_{\bullet})$. Also,
$$F^{i}(\overline{T}_ {r+1})\in \mathfrak{T}\Leftrightarrow F^{i}(T_{r+1})\in \mathfrak{T},$$ because $H_{r+1}(\mathrm{C}_{\bullet})\in
\mathfrak{T}$.

From the first short exact sequence we have the following exact sequence: $$ F^{j}(C_{r})\longrightarrow F^{j}(T_{r})
\longrightarrow F^{j+1}(\overline{T}_ {r+1}).$$ Since
$F^{-}(T_{r+1})\geq r+1$ and $F^{-}(C_{r})\geq r$ we find that
$F^{-}(T_{r})\geq r$.

The second short exact sequence induces the following exact sequence:
$$
\begin{CD}
(\ast) \ \:0 @>>> F^{0}(\overline{T}_{r+1}) @>>> F^{0}(K_{r})@>>> F^{0}(H_{r}(\mathrm{C}_{\bullet}))@>>>F^{1}(\overline{T}_ {r+1}).
\end{CD}
$$
We know that $F^{-}(T_{r+1})\geq r+1$ and $F^{-}(C_{r})\geq r$. So, we have
$$F^{1}(\overline{T}_ {r+1})  \textrm{ and }
F^{0}(C_r)\in\mathfrak{T},$$because $r\geq1$.
In view of  the monomorphism $F^{0}(K_{r})\hookrightarrow F^{0}(C_{r})$, we have $F^{0}(K_{r})\in \mathfrak{T}$. From $(\ast)$ we get that $F^{0}(H_{r}(\mathrm{C}_{\bullet}))\in \mathfrak{T}$. By the assumption of Lemma,  $H_{r}(\mathrm{C}_{\bullet})\in \mathfrak{T}$,
which is precisely what we wish to prove.
\end{proof}

Recall that a ring is coherent if each of its finitely generated ideals are finitely presented.

\begin{lemma}\label{res}
Let $\mathfrak{T}$ be a  torsion theory, $\underline{y}:=y_1,\ldots,y_r$ a finite sequence of elements of $R$ and $M$ an $R$-module. Let $\fa:=\underline{y}R$. If
$M\in \mathfrak{T}$, then the following assertions hold:
\begin{enumerate}
\item[$\mathrm{(i)}$] $H^{i}_{\underline{y}}(M)\in \mathfrak{T}$ for all $i$;
\item[$\mathrm{(ii)}$] $\Ext^{1}_{R}(R/\fa,M)\in \mathfrak{T}$;
\item[$\mathrm{(iii)}$] $H^{i}(\underline{y},M)\in \mathfrak{T}$ for all $i$;
\item[$\mathrm{(iv)}$] If $R$ is coherent, then $\Ext^{i}_{R}(R/\fa,M)\in\mathfrak{T}$ for all $i$;
\item[$\mathrm{(v)}$] If $R$ is coherent, then $H^{i}_{\fa}(M)\in\mathfrak{T}$ for all $i$.
\end{enumerate}
\end{lemma}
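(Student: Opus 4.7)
The plan is to exploit the closure properties of $\mathfrak{T}$ (under submodules, quotients, extensions, and directed limits) together with the hypothesis $M\in\mathfrak{T}$. Two elementary observations drive everything: finite direct sums $M^n$ lie in $\mathfrak{T}$ by iterated closure under extensions, and every localization $M_x=\varinjlim(M\xrightarrow{x}M\xrightarrow{x}\cdots)$ lies in $\mathfrak{T}$ by closure under directed limits. With these in hand, each of the five homological objects will be exhibited as a subquotient of a module in $\mathfrak{T}$.

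For (i), I would note that every term of the \v{C}ech complex $\check{C}(\underline{y};M)$ is a finite direct sum of localizations of $M$ at products of the $y_i$, hence lies in $\mathfrak{T}$; the cohomology modules $H^{i}_{\underline{y}}(M)$ are then subquotients. The same pattern handles (iii): the Koszul complex $\mathbb{K}_{\bullet}(\underline{y})$ is a bounded complex of finitely generated free $R$-modules, so each term of $\mathbb{K}^{\bullet}(\underline{y};M)=\Hom_R(\mathbb{K}_{\bullet}(\underline{y}),M)$ is a finite direct sum of copies of $M$, and $H^{i}(\underline{y};M)$ is a subquotient.

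For (ii), the surjection $R^{r}\twoheadrightarrow\fa$ given by $\underline{y}$ induces an injection $\Hom_R(\fa,M)\hookrightarrow\Hom_R(R^{r},M)\cong M^{r}\in\mathfrak{T}$, so $\Hom_R(\fa,M)\in\mathfrak{T}$. Applying $\Hom_R(-,M)$ to $0\to\fa\to R\to R/\fa\to 0$ and using $\Ext^{1}_R(R,M)=0$ realizes $\Ext^{1}_R(R/\fa,M)$ as a quotient of $\Hom_R(\fa,M)$, so it lies in $\mathfrak{T}$. Parts (iv) and (v) are where the coherence hypothesis enters: it ensures that every finitely generated ideal is finitely presented, hence by iteration $R/\fa$ admits a resolution by finitely generated free $R$-modules; applying $\Hom_R(-,M)$ gives a complex with terms in $\mathfrak{T}$, and $\Ext^{i}_R(R/\fa,M)$ appears as a subquotient. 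For (v), coherence further yields that each $\fa^{n}$ is finitely generated (being a product of finitely generated ideals) and therefore finitely presented, so (iv) applies to each $R/\fa^{n}$; closure of $\mathfrak{T}$ under directed limits then gives $H^{i}_{\fa}(M)=\varinjlim_{n}\Ext^{i}_R(R/\fa^{n},M)\in\mathfrak{T}$.

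I expect no single step to be a serious obstacle; the main care lies in verifying the resolution statement used in (iv), namely that over a coherent ring every finitely presented module admits a resolution by finitely generated free modules. This is a standard consequence of the fact that kernels of maps between finitely generated free modules over a coherent ring remain finitely generated, but it is the one place where the coherence hypothesis is genuinely used, and it is worth pointing out explicitly so that the reader sees precisely why coherence is exactly what is needed to upgrade the Ext statement in (ii) from $i=1$ to all $i$.
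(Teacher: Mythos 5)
Your proof is correct and follows the paper's strategy of realizing each cohomology module as a subquotient of a module built from finitely many copies or localizations of $M$, then invoking the closure of $\mathfrak{T}$ under submodules, quotients, extensions, and directed limits. The small differences---using the telescope colimit $M_x=\varinjlim(M\xrightarrow{x}M\xrightarrow{x}\cdots)$ in (i) rather than the paper's $\varinjlim_{s\in S}\Hom_R(Rs,M)$, and deriving (ii) from $0\to\fa\to R\to R/\fa\to 0$ (so $\Ext^1_R(R/\fa,M)$ is a quotient of $\Hom_R(\fa,M)\hookrightarrow M^r$) rather than from a finite free presentation $F\to R\to R/\fa\to 0$---are cosmetic variations within the same approach.
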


\begin{proof}
$\mathrm{(i)}$: Let $S$ be a multiplicative closed subset of $R$. One can find from \cite[Section 1.10]{Row} that $S^{-1}M=\Llm_{\stackrel{\longrightarrow}{s\in S}}\Hom_{R}(Rs,M)$. Note that $M\in \mathfrak{T}$. Let $s$ be in $S$. Then $$\Hom_{R}(Rs,M)\cong\Hom_{R}(R/(0:_{R}s),M)\cong(0:_{M}(0:_{R}s))\subseteq M.$$ So $\Hom_{R}(Rs,M)\in \mathfrak{T}$ and consequently ${\varinjlim}_{s\in S}\Hom_{R}(Rs,M)\in\mathfrak{T}$. This shows that $H^{i}_{\underline{y}}(M)\in\mathfrak{T}$, as claimed.

$\mathrm{(ii)}$: Consider the exact sequence $F\to R \to R/\fa\to 0$, where $F$ is a free $R$-module of
finite rank. Such a sequence exists, because $\fa$ is finitely generated. One may find $\Ext^{1}_{R}(R/\fa,M)$ as a subquotient of $\Hom_{R}(F,M)$. Clearly,  $\Hom_{R}(F,M)\in\mathfrak{T}$. So, $\Ext^{1}_{R}(R/\fa,M)\in \mathfrak{T}$.

$\mathrm{(iii)}$: This is straightforward from the definition of Koszul complex and we leave it to the reader.

$\mathrm{(iv)}$: Let $\textbf{F}_\bullet: \cdots \to F_{i+1}\to
F_i\to F_{n-1}\to \cdots\to F_0\to 0$ be a deleted free resolution
of $R/\fa$ consisting of finitely generated free modules (see
\cite[Corollary 2.5.2]{G}). Clearly,
$\Hom_{R}(F_i,M)\in\mathfrak{T}$. Therefore,
$\Ext^{i}_{R}(R/\fa,M)\in \mathfrak{T}$.

$\mathrm{(v)}$: Let $i$ be an integer. Note that $\fa^n$ is finitely generated for all $n$. Then, in view of part $(iv)$, we have $\Ext^{i}_{R}(R/\fa^n,M)\in \mathfrak{T}$ for all $n$. Therefore,$$H^{i}_{\fa}(M):=\underset{n}{\varinjlim}\Ext^{i}_{R}(R/\fa^n, M)\in\mathfrak{T}.$$Note that $\mathfrak{T}$ is closed under taking direct limit.
\end{proof}

\begin{theorem}\label{first}
Let $\mathfrak{T}$ be a  torsion theory of $R$-modules, $\fa$ a finitely generated ideal of $R$ and $M$ an $R$-module. The following assertions hold:
\begin{enumerate}
\item[$\mathrm{(i)}$] $\mathfrak{T}-\Cgrade_{R}(\fa,M)=\mathfrak{T}-\Kgrade_{R}(\fa,M)$;
\item[$\mathrm{(ii)}$] If $R$ is
coherent, then
$\mathfrak{T}-\Kgrade_{R}(\fa,M)=\mathfrak{T}-\Egrade_R(\fa,M)$;
\item[$\mathrm{(iii)}$] If $R$ is
coherent, then
$\mathfrak{T}-\Egrade_R(\fa,M)=\mathfrak{T}-\Hgrade_{R}(\fa,M)$.
\end{enumerate}
\end{theorem}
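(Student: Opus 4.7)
The plan is to treat the four grades uniformly as vanishing thresholds for the respective negative strongly connected sequences of covariant functors. By Lemma~\ref{res}, each of $H^i(\underline{x};-)$, $H^i_{\underline{x}}(-)$, $\Ext^i_R(R/\fa,-)$ and $H^i_\fa(-)$ possesses the $\mathfrak{T}$-restriction property (coherence is required for the last two), and by Lemma~\ref{zero} the four grades vanish simultaneously. Each equality in the theorem will then be proved by an induction whose base case is Lemma~\ref{zero} and whose inductive step is carried by Lemma~\ref{PS} (acyclicity) together with standard comparison complexes.

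For part (i), I would begin by recording the identification $\check{C}(\underline{x};M)\cong\varinjlim_t\mathbb{K}^{\bullet}(\underline{x}^t;M)$, giving $H^i_{\underline{x}}(M)\cong\varinjlim_t H^i(\underline{x}^t;M)$. Combined with the generator-independence lemma preceding Definition~\ref{def} (so that $H^i(\underline{x}^t;M)\in\mathfrak{T}\Leftrightarrow H^i(\underline{x};M)\in\mathfrak{T}$, using $\rad(\underline{x}^tR)=\rad(\underline{x}R)$) and the closure of $\mathfrak{T}$ under directed limits, this immediately yields $\mathfrak{T}\text{-}\Cgrade_R(\fa,M)\geq\mathfrak{T}\text{-}\Kgrade_R(\fa,M)$. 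For the reverse inequality, I would apply Lemma~\ref{PS} with $F^i=H^i_{\underline{x}}(-)$ (whose $\mathfrak{T}$-restriction is Lemma~\ref{res}(i)) to the Koszul chain complex $\mathbb{K}_{\bullet}(\underline{x};M)$: since $F^-(M^{\binom{r}{i}})=F^-(M)=\mathfrak{T}\text{-}\Cgrade_R(\fa,M)$, the hypotheses of Lemma~\ref{PS} are met in the appropriate range, and the duality $H_i(\underline{x};M)\cong H^{r-i}(\underline{x};M)$ converts the resulting homology vanishing into the desired Koszul cohomology vanishing.

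For parts (ii) and (iii), assume $R$ is coherent. For (ii), choose a free resolution $F_\bullet\to R/\fa$ of finitely generated free modules, lift the canonical surjection $R\to R/\fa$ to a chain map $\varphi:\mathbb{K}_{\bullet}(\underline{x})\to F_\bullet$, apply $\Hom_R(-,M)$, and control the mapping cone via Lemma~\ref{PS} with $F^i:=\Ext^i_R(R/\fa,-)$, whose $\mathfrak{T}$-restriction is Lemma~\ref{res}(iv); this forces the Ext and Koszul grades to agree. For (iii), begin with $H^i_\fa(M)=\varinjlim_n\Ext^i_R(R/\fa^n,M)$ and Lemma~\ref{res}(v); closure of $\mathfrak{T}$ under direct limits supplies $\mathfrak{T}\text{-}\Hgrade_R(\fa,M)\geq\mathfrak{T}\text{-}\Egrade_R(\fa,M)$, while the reverse direction is obtained by induction on $n$, extending the $\mathfrak{T}$-vanishing of $\Ext^i_R(R/\fa,M)$ to $\Ext^i_R(R/\fa^n,M)$ through the exact sequences $0\to\fa^n/\fa^{n+1}\to R/\fa^{n+1}\to R/\fa^n\to 0$, precisely as in the proof of $(i)\Rightarrow(ii)$ in Lemma~\ref{zero}.

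The principal obstacle is the reverse direction in (i): vanishing of a direct limit modulo $\mathfrak{T}$ does not in general propagate to each Koszul stage, since directed colimits of modules outside $\mathfrak{T}$ can land inside $\mathfrak{T}$. The argument must therefore lean on Lemma~\ref{PS} applied to the Koszul chain complex with careful reindexing, rather than on a soft colimit argument. The mapping-cone analysis in (ii) follows the same template but is more routine because coherence guarantees finitely generated free resolutions; part (iii) is tamer still, essentially duplicating the bootstrap from the proof of Lemma~\ref{zero}.
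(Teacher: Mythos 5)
Your plan correctly singles out Lemma~\ref{PS} as the engine and Lemma~\ref{res}, Lemma~\ref{zero} as the needed inputs, and the direction of~(i) that you handle by applying Lemma~\ref{PS} with $F^{i}=H^{i}_{\underline{x}}(-)$ to the Koszul complex is essentially the paper's argument. However, the other direction of~(i) is not correct as written. You propagate $H^{i}(\underline{x}^{t};M)\in\mathfrak{T}\Leftrightarrow H^{i}(\underline{x};M)\in\mathfrak{T}$ by invoking the generator-independence lemma together with $\rad(\underline{x}^{t}R)=\rad(\underline{x}R)$. But that lemma only asserts independence of the choice of generators for a \emph{fixed} ideal $\fa$, whereas $\underline{x}^{t}$ generates $(\underline{x}^{t})R\neq\fa$, and the radical argument appearing in its proof is confined to the \v{C}ech side. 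The statement that $\mathfrak{T}$-$\Kgrade$ depends only on the radical is essentially equivalent to~(i) itself (via the \v{C}ech comparison), so this route is circular. The paper avoids this by running Lemma~\ref{PS} symmetrically: $F^{i}=H^{i}(\underline{x},-)$ on the truncated \v{C}ech complex for $\Kgrade\le\Cgrade$, and $F^{i}=H^{i}_{\underline{x}}(-)$ on the truncated Koszul complex for $\Cgrade\le\Kgrade$.

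Parts~(ii) and~(iii) also have gaps. In~(ii), the Koszul complex is not a resolution of $R/\fa$ in general, so the chain map $\varphi\colon\mathbb{K}_{\bullet}(\underline{x})\to F_{\bullet}$ is not a quasi-isomorphism and it is not at all clear that ``controlling the mapping cone'' yields the two inequalities; the paper instead applies Lemma~\ref{PS} twice, once to the truncated Koszul complex with $F^{i}=\Ext^{i}_{R}(R/\fa,-)$ to get $\Egrade\le\Kgrade$, and once to the truncated complex $\Hom_{R}(\mathbf{F}_{\bullet},M)$ with $F^{i}=H^{i}(\underline{x},-)$ to get $\Kgrade\le\Egrade$. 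In~(iii), the logic of the two directions is tangled: closure under direct limits gives $\Hgrade\ge\Egrade$ only after one shows $\Ext^{i}_{R}(R/\fa^{n},M)\in\mathfrak{T}$ for all $n$ and $i<\Egrade$, and your proposed induction through $0\to\fa^{n}/\fa^{n+1}\to R/\fa^{n+1}\to R/\fa^{n}\to 0$ requires controlling $\Ext^{i}_{R}(\fa^{n}/\fa^{n+1},M)$, which does not follow from the cited lemmas for $i\ge 1$ (the bootstrap in Lemma~\ref{zero} works only for $\Hom$). The paper instead obtains $\Egrade_{R}(\fa^{n},M)=\Egrade_{R}(\fa,M)$ from the already-proved equalities of~(i),~(ii) together with the radical-invariance of $\Cgrade$, and it proves the remaining inequality $\Hgrade\le\Egrade$ by yet another application of Lemma~\ref{PS} combined with~(ii) — a step missing from your sketch altogether.
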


\begin{proof} $\mathrm{(i)}$: Let $\underline{x}$ be a generating set of $\fa$. First
we show that $$\mathfrak{T}-\Kgrade_R(\fa,M)\leq
\mathfrak{T}-\Cgrade_{R}(\fa,M)=:s \  \ (\ast).$$ Without loss of
generality we can assume that $s<\infty$. Suppose that
$$\mathfrak{T}-\Kgrade_R(\fa,M)\geq s+1.$$ Let
$\mathrm{C}_{t}:0\to C_{{s+1}}\to C_{s}\to \cdots\to C_{0}$ be a
deleted \v{C}ech complex of relative to $M$ where
$C_{{s+1}}:=\bigoplus M_{x_i}$ and set
$F^i(-):=H^{i}(\underline{x},-)$. This is clear that
$\{F^i(-)\}_{i\geq0}$ is a negative strongly  connected sequence of
covariant functors.  Due to Lemma~\ref{res} we know
that$\{F^i(-)\}_{i\geq0}$ has the $\mathfrak{T}$-restriction property.
Now we check the reminder assumptions of Lemma~\ref{PS}. Any torsion
theory is closed under taking localization (see the proof of
Lemma~\ref{res}). Then,  by the commutativity of flat extensions  with
cohomology functors one may easily find that
$$F^-(C_{i})=\mathfrak{T}-\Kgrade_R(\fa,C_{i})\geq
\mathfrak{T}-\Kgrade_R(\fa,M)\geq s+1\geq i$$ for all $1\leq i\leq s+1$. In view of \cite[Proposition 2.1(d)]{HM}, we have  $H^0_{\fa}(H_i(\mathrm{C}_{t}))=H_i(\mathrm{C}_{t})$ for all $i$. Due to Lemma~\ref{zero} we
have either $H_i(\mathrm{C}_{t})\in\mathfrak{T}$ or $F^{-}(H_i(\mathrm{C}_{t}))=0$  for all $i$. Therefore, by Lemma~\ref{PS},
$H_i(\mathrm{C}_{t})\in\mathfrak{T}$ for all $1\leq i\leq s+1$.
On the other hand,
$\mathfrak{T}-\Cgrade_{R}(\fa,M)=s$. So, there exists $1\leq j\leq s+1$ such that $H_i(\mathrm{C}_{t})\notin\mathfrak{T}$. This contradiction completes the
proof of $(\ast)$.

Next we show that $\mathfrak{T}-\Cgrade_{R}(\fa,M)\leq
\mathfrak{T}-\Kgrade_{R}(\fa,M)=:s.$ Without loss of generality we
can assume that $s<\infty$. Suppose that
$$\mathfrak{T}-\Cgrade_{R}(\fa,M)\geq s+1.$$ Let
$\mathrm{C}_{t}:0\to C_{{s+1}}\to C_{s}\to \cdots\to C_{0}$ be  the
deleted Koszul complex, where $C_{{s+1}}=K^0(\underline{x},M)$ and
set $F^i(-):=H^{i}_{\underline{x}}(-)$. Clearly,
$\{F^i(-)\}_{i\geq0}$ is a negative strongly  connected sequence of
covariant functors. By Lemma~\ref{res}, it has
$\mathfrak{T}$-restriction property.  One has
$$F^-(\mathrm{C}_{i})=\mathfrak{T}-\Cgrade_R(\fa,\mathrm{C}_{i})
=\mathfrak{T}-\Cgrade_{R}(\fa,M)\geq s+1\geq i.$$ All the
homology modules of $\mathrm{C}_{\bullet}$ are annihilated by $\fa$. So by Lemma~\ref{zero},
either $H_{i}(\mathrm{C}_{\bullet})\in \mathfrak{T}$ or $F^-(H_{i}(\mathrm{C}_{\bullet}))=0$.
Therefore,  Lemma~\ref{PS} yields that
$H_{i}(\mathrm{C}_{\bullet})\in \mathfrak{T}$ for all $1\leq i \leq s$, which is a
contradiction to $\mathfrak{T}-\Kgrade_{R}(\fa,M)= s$.

$\mathrm{(ii)}$:  Clearly, $\{\Ext^i_R(R/\fa,-)\}_{i\geq0}$ is negative strongly  connected sequence of covariant functors.  Due to our assumptions and Lemma~\ref{res}, we know that $\{\Ext^i_R(R/\fa,-)\}_{i\geq0}$ has $\mathfrak{T}$-restriction
property. Note that the claim $\mathfrak{T}-\Egrade_{R}(\fa,M)\leq\mathfrak{T}-\Kgrade_{R}(\fa,M)=:s$ follows by repeating the proof of part $(i)$.

Now we show that $\mathfrak{T}-\Kgrade_{R}(\fa,M)\leq
\mathfrak{T}-\Egrade_{R}(\fa,M):=s$. Without loss of generality we
can assume that $s<\infty$. Suppose that
$$\mathfrak{T}-\Kgrade_{R}(\fa,M)\geq s+1.$$ Let
$\textbf{F}_\bullet: \cdots \to F_{i+1}\to F_i\to F_{n-1}\to
\cdots\to F_0\to 0$ be a free resolution of $R/\fa$ consisting of
finitely generated free modules (see \cite[Corollary 2.5.2]{G}) and
set $F^i(-):=H^{i}(\underline{x},-)$. Keep in mind that
$\mathfrak{T}$ is closed under finite direct product. For each
$i\geq 0$, set $C^i:=\Hom(F_i,M)$. It consist of  finitely many
direct product of $M$.  One has $F^-(M)\simeq F^-(C^i)$, since
$\mathbb{K}^{\bullet}(\underline{x},\prod M)\simeq \prod
\mathbb{K}^{\bullet}(\underline{x},M)$.  Consider the
$\mathrm{C}_{t}^{\bullet}:0\to C^{0}\to C^{1}\to\cdots\to C^{{s+1}}$
the deleted complex of $\Hom(\textbf{F}_\bullet,M)$. Also, note that
the cohomology modules of $\mathrm{C}_{t}^{\bullet}$ are annihilated by $\fa$.
The reminder of proof is  a repeating the proof of part $(i)$.

$\mathrm{(iii)}$: By making straightforward modification of the proof of $$\mathfrak{T}-\Egrade_R(\fa,M)\leq
\mathfrak{T}-\Kgrade_{R}(\fa,M),$$one can prove that$$\mathfrak{T}-\Hgrade_R(\fa,M)\leq
\mathfrak{T}-\Kgrade_{R}(\fa,M).$$In view of $\mathrm{(ii)}$, we have $$\mathfrak{T}-\Hgrade_R(\fa,M)\leq
\mathfrak{T}-\Egrade_{R}(\fa,M).$$
In order to prove the reverse inequality, let $\underline{x}$ be a generating set for $\fa$ and let $n$ be an integer.
Assume that $\underline{y}$ is a generating set for $\fa^n$. By \cite[Proposition 2.1(e)]{HM}, $H^{i}_{\underline{x}}(M)\cong
H^{i}_{\underline{y}}(M)$, because $\rad(\underline{x}R)=\rad(\underline{y}R)$. Thus $\mathfrak{T}-\Cgrade_{R}(\fa^n,M)=\mathfrak{T}-\Cgrade_{R}(\fa,M)$.
In view of parts $\mathrm{(i)}$ and $\mathrm{(ii)}$, we have
\[\begin{array}{ll}
\mathfrak{T}-\Egrade_R(\fa^n,M)&=
\mathfrak{T}-\Kgrade_{R}(\fa^n,M)\\
&=\mathfrak{T}-\Cgrade_{R}(\fa^n,M)\\
&=\mathfrak{T}-\Cgrade_{R}(\fa,M)\\
&=\mathfrak{T}-\Kgrade_{R}(\fa,M)\\
&=\mathfrak{T}-\Egrade_{R}(\fa,M).
\\
\end{array}\]
So $\Ext^i_R(R/\fa^n,M)\in\mathfrak{T}$ for all $i<\mathfrak{T}-\Egrade_{R}(\fa,M)$.
Therefore, $$\mathfrak{T}-\Egrade_{R}(\fa,M)\leq\mathfrak{T}-\Hgrade_R(\fa,M).$$Note that $\mathfrak{T}$ is closed under taking direct limit.
\end{proof}

Here, we recall some notions from \cite{AS}.

\begin{definition}
\label{almost} Let $T$ be an algebra over a Noetherian local ring $(R,\fm)$.
\begin{enumerate}
\item[$\mathrm{(i)}$]  We say that  $T$ 
equipped with a value map $v$, if there exits a map  $v:T\longrightarrow\mathbb{R} \cup \{\infty\}$
satisfying:\\
$\mathrm{(a)}$
$v(ab)=v(a)+v(b)$ for all $a,b \in T$\\
$\mathrm{(b)}$
$v(a+b) \ge \min\{v(a),v(b)\}$ for all $a,b \in T$\\
$\mathrm{(c)}$
$v(a)=\infty$ if and only if $a=0$.
\item[$\mathrm{(ii)}$]Let $T$  and $v$ be as part $(i)$.
A $T$-module $M$ is called \textit{almost zero} with respect to $v$, if $m \in M$ and $\epsilon>0$ are given, then there exists $b \in T$ such that $b \cdot m=0$ and $v(b)<\epsilon$.
\item[$\mathrm{(iii)}$] Let $T$   and $v$ be as part $(i)$. Let $\underline{x}:=x_{1},\ldots,x_{r}$ be a generating set for $\fm$. A $T$-module $N$ is  called \textit{almost Cohen-Macaulay} over $R$, if $H^{i}_{\underline{x}}(N)$ is almost zero  with respect to $v $ for all $i \ne \dim R$, but $N/\fm N$ is not almost zero  with respect to $v$.\end{enumerate}
\end{definition}

As an immediate application of  Theorem \ref{first}, in the following we present an affirmative answer to \cite[Question 4.8]{AS}:

\begin{corollary}
\label{ap}
Let $T$ be an algebra equipped with a value map $v$ over a Noetherian local ring $(R,\fm)$, $M$  an almost Cohen-Macaulay $T$-module and let $\underline{x}:=x_{1},\ldots,x_{r}$ be a generating set for $\fm$. Then  $H^{\dim R}_{\underline{x}}(M)$ is not almost zero.
\end{corollary}

\begin{proof}
 We denote the class of almost zero modules with respect to $v$
by $\mathfrak{T}_v$. Clearly, $\mathfrak{T}_v$ is a torsion theory.
Also,  $$\mathfrak{T}_v-\Cgrade_{T}(\fm T,M)\geq\dim R,$$ since $H^{i}_{\underline{x}}(M)$ is almost zero for all $i \ne \dim R$.
In the light of Theorem \ref{first}, we see that $$\mathfrak{T}_v-\Kgrade_{T}(\fm T,M)\geq\dim R.$$ Keep in mind that
$M/\fm M$ is not almost zero. So $$\mathfrak{T}_v-\Kgrade_{R}(\fm T,M)=\dim R.$$ Again by applying Theorem \ref{first}, we see that
$$\mathfrak{T}_v-\Cgrade_{T}(\fm T,M)=\dim R.$$ Therefore, $H^{\dim R}_{\underline{x}}(M)\notin\mathfrak{T}_v$, which is the desired claim.
\end{proof}

\begin{proposition}
\label{reg}
Let $\mathfrak{T}$ be a  torsion theory,
$\fa$  a finitely generated ideal of $R$ and $M$ an $R$-module. If $x\in\fa$ is a weak
$M$-regular sequence with respect to $\mathfrak{T}$, then $$\mathfrak{T}-\Kgrade_{R}(\fa,M)=
\mathfrak{T}-\Kgrade_{R}(\fa,M/xM)+1.$$In particular, $\mathfrak{T}-\cgrade_{R}(\fa,M)\leq\mathfrak{T}-\Kgrade_{R}(\fa,M)$.
\end{proposition}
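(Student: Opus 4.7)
My plan is to use Koszul cohomology together with two short exact sequences involving multiplication by $x$. Because $\mathfrak{T}-\Kgrade_R(\fa,-)$ is independent of the chosen generating set (Lemma 2.4), I will work with a generating set $\underline{x}=x,x_{2},\ldots,x_{r}$ of $\fa$ that contains $x$. The key structural fact I exploit is that every Koszul cohomology module $H^{i}(\underline{x};-)$ is annihilated by $\fa$, hence in particular by $x$. Combined with the hypothesis $(0:_{M}x)\in\mathfrak{T}$ and Lemma 2.6(iii), this lets me turn the long exact sequences into very tight information about membership in $\mathfrak{T}$.

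\textbf{Step 1 (kill the torsion submodule).} From $0\to (0:_{M}x)\to M\to M/(0:_{M}x)\to 0$ and Lemma 2.6(iii), the outer terms of the induced long exact Koszul sequence lie in $\mathfrak{T}$ for every $i$. Using that $\mathfrak{T}$ is closed under sub\-modules, quotients and extensions, I get the equivalence
\[
H^{i}(\underline{x};M)\in\mathfrak{T}\ \Longleftrightarrow\ H^{i}(\underline{x};M/(0:_{M}x))\in\mathfrak{T}
\]
for every $i$, and in particular $\mathfrak{T}\!-\!\Kgrade_{R}(\fa,M)=\mathfrak{T}\!-\!\Kgrade_{R}(\fa,M/(0:_{M}x))$.

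\textbf{Step 2 (split the long exact sequence).} Multiplication by $x$ is injective on $N:=M/(0:_{M}x)$ with cokernel $M/xM$, giving $0\to N\xrightarrow{x}M\to M/xM\to 0$. The connecting homomorphism in Koszul cohomology is induced by multiplication by $x$, which acts as $0$ on every $H^{i}(\underline{x};-)$. Hence the long exact sequence collapses into short exact sequences
\[
0\longrightarrow H^{i}(\underline{x};M)\longrightarrow H^{i}(\underline{x};M/xM)\longrightarrow H^{i+1}(\underline{x};N)\longrightarrow 0
\]
for every $i\ge 0$. Setting $k:=\mathfrak{T}\!-\!\Kgrade_{R}(\fa,M/xM)$, for each $i<k$ the middle term lies in $\mathfrak{T}$, forcing $H^{i}(\underline{x};M)\in\mathfrak{T}$ (as a submodule) and $H^{i+1}(\underline{x};N)\in\mathfrak{T}$ (as a quotient), which via Step 1 upgrades to $H^{i+1}(\underline{x};M)\in\mathfrak{T}$; combining these shows $H^{j}(\underline{x};M)\in\mathfrak{T}$ for all $j\le k$. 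For $i=k$, the middle term is not in $\mathfrak{T}$ while the submodule $H^{k}(\underline{x};M)$ still is, so the quotient $H^{k+1}(\underline{x};N)\notin\mathfrak{T}$, and Step 1 then forces $H^{k+1}(\underline{x};M)\notin\mathfrak{T}$. Thus $\mathfrak{T}\!-\!\Kgrade_{R}(\fa,M)=k+1$, which is the displayed formula.

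\textbf{Step 3 (the ``in particular'' statement).} I will induct on the length $\ell$ of a weak $M$-regular sequence $x_{1},\ldots,x_{\ell}\subseteq\fa$ with respect to $\mathfrak{T}$. The case $\ell=0$ is trivial. For the inductive step, observe that $x_{2},\ldots,x_{\ell}$ is a weak $M/x_{1}M$-regular sequence of length $\ell-1$, so by the inductive hypothesis $\mathfrak{T}\!-\!\Kgrade_{R}(\fa,M/x_{1}M)\ge\ell-1$, and then Step 2 applied to $x_{1}$ gives $\mathfrak{T}\!-\!\Kgrade_{R}(\fa,M)\ge\ell$. Taking the supremum over $\ell$ yields the desired inequality.

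\textbf{Main obstacle.} The delicate point is Step 2: I need the connecting map to vanish so that the long exact sequence splits, and for that I must know that multiplication by $x$ kills every Koszul cohomology module. This is where selecting a generating set of $\fa$ containing $x$ and invoking the generating-set-independence (Lemma 2.4) is essential; without it the collapse of the long exact sequence into short ones would not be available, and the clean identity $\mathfrak{T}\!-\!\Kgrade_{R}(\fa,M)=\mathfrak{T}\!-\!\Kgrade_{R}(\fa,M/xM)+1$ would fail.
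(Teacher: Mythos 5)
Your Step 1 and Step 3 are fine and match the paper, but Step 2 contains a genuine gap. You claim that the long exact sequence coming from $0\to N\xrightarrow{f}M\to M/xM\to 0$ (with $N=M/(0:_M x)$) splits into short exact sequences because "the map is induced by multiplication by $x$, which acts as $0$ on Koszul cohomology." This is not correct: the map $f\colon N\to M$ sending $\overline m\mapsto xm$ is \emph{not} multiplication by $x$ on a fixed module, so there is no reason for $H^i(\underline x;f)$ to vanish — and indeed it need not. For example, take $R=\mathbb{Z}$, $x=4$, $M=\mathbb{Z}/8\oplus\mathbb{Z}$, and let $\mathfrak T$ be the torsion abelian groups; then $(0:_M 4)\cong\mathbb{Z}/4\in\mathfrak T$, $N\cong\mathbb{Z}/2\oplus\mathbb{Z}$, and $H^0(4;f)$ sends the generator of $H^0(4;N)\cong\mathbb{Z}/2$ to the nonzero element $(4,0)\in(0:_M 4)=H^0(4;M)$. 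A simple cardinality count shows your claimed exact sequence $0\to H^0(M)\to H^0(M/xM)\to H^1(N)\to 0$ cannot be exact here. What \emph{is} true, and what actually powers the argument, is only the weaker relation $H^i(\underline x;f)\circ H^i(\underline x;\pi)=x\cdot\mathrm{id}_{H^i(\underline x;M)}=0$, where $\pi\colon M\to N$ is the projection.

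The paper works with exactly this weaker fact, together with your Step 1, rather than claiming the long exact sequence splits. Concretely: for $i<t-1$ (where $t=\mathfrak{T}\text{-}\Kgrade_R(\fa,M)$) the surrounding terms $H^i(\underline x;M)$ and $H^{i+1}(\underline x;N)$ already lie in $\mathfrak T$, squeezing $H^i(\underline x;M/xM)$ into $\mathfrak T$ without needing any vanishing of $H^i(f)$; and in degree $t$ the identity $H^t(f)\circ H^t(\pi)=0$ gives $\im H^t(\pi)\subseteq\ker H^t(f)=\im\Delta^{t-1}$, while the first exact sequence forces $\im H^t(\pi)\notin\mathfrak T$ (since $H^t(\underline x;M)\notin\mathfrak T$ but $H^t(\underline x;(0:_M x))\in\mathfrak T$), whence $H^{t-1}(\underline x;M/xM)\notin\mathfrak T$. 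Your argument can be repaired along similar lines — the essential missing observation is that $\im H^i(\underline x;f)\in\mathfrak T$ for every $i$ (because $H^i(f)$ kills $\im H^i(\pi)$ and so factors through $\coker H^i(\pi)$, which embeds in $H^{i+1}(\underline x;(0:_M x))\in\mathfrak T$), and then the four-term exact sequences can be analyzed by extensions rather than by the nonexistent short exact sequences — but as written, Step 2 asserts a false splitting, and the proof does not go through.
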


\begin{proof}
Assume that $\underline{x}$ is a generating set for
$\fa$. Consider the following exact sequences
$$
\begin{CD}
0 @>>> (0:_{M} x) @>{\rho}>>  M @>{\pi}>> M/(0:_{M} x)@>>> 0,
\\
0 @>>>M/(0:_{M} x) @>{f}>>  M @>>> M/xM @>>> 0,\\
\end{CD}
$$where $f(m)=xm$ for all $m\in M$.
Then, we have  the following induced exact sequences: $$(\ast)\   \
H^{i}(\underline{x},(0:_{M} x))\longrightarrow
H^{i}(\underline{x},M)\stackrel{H^{i}(\underline{x},\pi)}\longrightarrow
H^{i}(\underline{x},M/(0:_{M}
x))\stackrel{\Delta^{i}}\longrightarrow
H^{i+1}(\underline{x},(0:_{M} x))$$
$$(\ast\ast)\   \  H^{i}(\underline{x},M/(0:_{M} x))
\stackrel{H^{i}(\underline{x},f)}\longrightarrow
H^{i}(\underline{x},M)\longrightarrow
H^{i}(\underline{x},M/xM)\stackrel{\Delta^{i'}}\longrightarrow
H^{i+1}(\underline{x},M/(0:_{M} x)).$$

By our assumption, $(0:_{M} x)\in\mathfrak{T}$. So it follows from
Lemma~\ref{res} that $H^{i}(\underline{x},(0:_{M} x))\in \mathfrak{T}$ for all $i$.
Therefore by $(\ast)$, we have
$$H^{i}(\underline{x},M)\in \mathfrak{T}\Longleftrightarrow H^{i}(\underline{x},M/(0:_{M} x))\in \mathfrak{T}$$
for all $i$.

Set $t:=\mathfrak{T}-\Kgrade_{R}(\fa,M)$ and let $i<t-1$. Due
to $(\ast\ast)$ we have  $H^{i}(\underline{x},M/xM)\in \mathfrak{T}$,
and so$$\mathfrak{T}-\Kgrade_{R}(\fa,M/x  M)\geq t-1.$$
It is enough for us to show that
$H^{t-1}(\underline{x},M/xM)\notin\mathfrak{T}$. In order to prove this, we first
note that:
\[\begin{array}{ll}
H^{t}(\underline{x},f)oH^{t}(\underline{x},\pi)&=H^t(\underline{x},fo\pi)
\\&=x1_{H^t(\underline{x},M)}\\&=0,
\\
\end{array}\]
since $H^{t}(\underline{x},M)$ is  annihilated by $(\underline{x})R$.
Therefore,
$$\im H^{t}(\underline{x},\pi)\subseteq \ker
H^{t}(\underline{x},f)=\im\Delta^{t-1'} \  \
(\ast\ast\ast).$$

Keep in mind that $H^{t}(\underline{x},(0:_{M} x))\in\mathfrak{T}$ and
$H^{t}(\underline{x},M)\notin\mathfrak{T}$. This combining with
$(\ast)$ shows that $\im H^t(\underline{x},\pi)\notin\mathfrak{T}$. Consequently,  $(\ast\ast\ast)$ implies that
$\im\Delta^{t-1'}\notin\mathfrak{T}$. Thus by inspection of  $(\ast\ast)$, we see that $H^{t-1}(\underline{x},M/xM)\notin\mathfrak{T}$.

The second assertion follows from the first.
\end{proof}

\begin{corollary}
\label{add}
Let $\mathfrak{T}$ be a  torsion theory, $\underline{x}:=x_1,\ldots,x_n$ a finite sequence of elements of $R$
and $M$ an $R$-module.  Then the following assertions are equivalent:\begin{enumerate}
\item[$\mathrm{(i)}$] $\underline{x}$ is a weak $M$-regular sequence with respect to
$\mathfrak{T}$;
\item[$\mathrm{(ii)}$] $\mathfrak{T}-\Kgrade_{R}((x_1,\ldots,x_i)R,M)\geq i$ for all $i$.
\end{enumerate}
\end{corollary}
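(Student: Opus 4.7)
The plan is to derive both implications by iterating Proposition~\ref{reg}, which converts the weak $\mathfrak{T}$-regularity of a single element $x\in\fa$ into the shift identity $\mathfrak{T}-\Kgrade_{R}(\fa,M)=\mathfrak{T}-\Kgrade_{R}(\fa,M/xM)+1$. The definition of a weak $M$-regular sequence with respect to $\mathfrak{T}$ is tailor-made for this iteration: unwinding the condition shows that $x_j$ is weak $M/(x_1,\ldots,x_{j-1})M$-regular with respect to $\mathfrak{T}$ for each $j$, so at every stage the next element qualifies to be used in Proposition~\ref{reg}.

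For $(\mathrm{i})\Rightarrow(\mathrm{ii})$, I fix $i\leq n$ and set $\fa_i:=(x_1,\ldots,x_i)R$. Since $x_1\in\fa_i$ is weak $M$-regular with respect to $\mathfrak{T}$, Proposition~\ref{reg} applies and yields $\mathfrak{T}-\Kgrade_R(\fa_i,M)=\mathfrak{T}-\Kgrade_R(\fa_i,M/x_1M)+1$. Then $x_2\in\fa_i$ is weak $M/x_1M$-regular with respect to $\mathfrak{T}$, so a second application gives an analogous identity for $M/x_1M$, and so on. After $i$ iterations I arrive at
$$\mathfrak{T}-\Kgrade_R(\fa_i,M)=\mathfrak{T}-\Kgrade_R(\fa_i,M/\fa_iM)+i\geq i,$$
the final inequality being automatic because $\mathfrak{T}-\Kgrade_R$ takes values in $\mathbb{N}_0\cup\{+\infty\}$.

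For $(\mathrm{ii})\Rightarrow(\mathrm{i})$, I proceed by induction on $i$. The base case $i=1$ is immediate: $\mathfrak{T}-\Kgrade_R(x_1R,M)\geq 1$ says exactly that $H^0((x_1);M)=(0:_M x_1)\in\mathfrak{T}$. For the inductive step, assume $x_1,\ldots,x_{i-1}$ is already a weak $M$-regular sequence with respect to $\mathfrak{T}$; iterating Proposition~\ref{reg} $i-1$ times as above produces
$$\mathfrak{T}-\Kgrade_R(\fa_i,M)=\mathfrak{T}-\Kgrade_R\bigl(\fa_i,M/(x_1,\ldots,x_{i-1})M\bigr)+(i-1),$$
so the hypothesis forces $\mathfrak{T}-\Kgrade_R(\fa_i,M/(x_1,\ldots,x_{i-1})M)\geq 1$. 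By the very definition of $\Kgrade$, this means $H^0((x_1,\ldots,x_i);M/(x_1,\ldots,x_{i-1})M)\in\mathfrak{T}$; but $x_1,\ldots,x_{i-1}$ already act as zero on that quotient, so this $H^0$ collapses to $(0:_{M/(x_1,\ldots,x_{i-1})M}x_i)$, which is exactly the condition needed to extend the weak regular sequence by $x_i$. I do not anticipate a substantial obstacle here; the only subtlety is keeping track of the successive quotients and checking that Proposition~\ref{reg} applies at every step, which is guaranteed by how weak regular sequences are defined.
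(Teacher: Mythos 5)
Your proof is correct and takes essentially the same route as the paper: both directions rest on iterating Proposition~\ref{reg}, and the reverse implication is handled by the same induction (reduce modulo the first $i-1$ elements and read off that $x_i$ is weak regular on the quotient from $H^0\in\mathfrak{T}$). Your write-up is just a more explicit unwinding of the iteration that the paper compresses into a single invocation of Proposition~\ref{reg}.
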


\begin{proof}
In view of Proposition~\ref{reg}, the only non trivial implication is $\mathrm{(ii)}\Rightarrow \mathrm{(i)}$. By induction on $n$,
we show that $\underline{x}$ is a weak $M$-regular sequence with respect to
$\mathfrak{T}$. When $n=1$, we have $H^0(x_1,M)\in \mathfrak{T}$, and so $x_1$ is a weak $M$-regular sequence with respect to
$\mathfrak{T}$. Now suppose that $n>1$ and that result has been proved for all sequences of elements of $R$ with length $n-1$.
Let $\underline{x}:=x_1,\ldots,x_n$ be a sequence of elements of $R$ such that $\mathfrak{T}-\Kgrade_{R}((x_1,\ldots,x_i)R,M)\geq i$
for all $i$ and let $\underline{x}':=x_1,\ldots,x_{n-1}$. Then by the inductive hypothesis, $\underline{x}'$ is  a weak $M$-regular
sequence with respect to $\mathfrak{T}$. In view of Proposition~\ref{reg}, $\mathfrak{T}-\Kgrade_{R}((x_1,\ldots,x_n)R,M/\underline{x}' M)\geq 1$.
It turns out that $x_n$ is a weak $M/\underline{x}' M$-regular sequence with respect to $\mathfrak{T}$. So $\underline{x}$ is a weak
$M$-regular sequence with respect to $\mathfrak{T}$.
\end{proof}

\section{Proof of Theorem~\ref{main2}}

Let $M$ be an $R$-module. Recall that a prime ideal $\fp$ is \textit{weakly associated}
to $M$, if $\fp$ is minimal over $(0 :_R m)$ for some $m\in M $. We
denote the set of all weakly associated prime ideals of $M$ by $\wAss_R(M)$.

\begin{lemma}\label{key}Let $\mathfrak{T}$ be a  torsion theory and $M$ an $R$-module. The following assertions hold:
\begin{enumerate}
\item[$\mathrm{(i)}$] If $M\in \mathfrak{T}$, then $R/ \fp\in \mathfrak{T}$ for all $\fp\in\Supp_R(M)$;
\item[$\mathrm{(ii)}$] If $M\in \mathfrak{T}$, then $R/ \fp\in \mathfrak{T}$ for all $\fp\in\wAss_R(M)$.
\end{enumerate}
\end{lemma}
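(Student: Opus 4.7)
The plan is to reduce both parts to a direct application of the closure properties listed in Definition~\ref{tor}, using nothing deeper than closure under submodules and quotients.

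For part (i), I would first record the elementary characterization that $\fp\in\Supp_R(M)$ if and only if there exists $m\in M$ with $\Ann_R(m)\subseteq\fp$. The implication ``$\Leftarrow$'' is immediate, since then $m/1\neq 0$ in $M_{\fp}$. For ``$\Rightarrow$'', the nonvanishing of $M_{\fp}$ produces some $m\in M$ whose image $m/1$ in $M_{\fp}$ is nonzero, which means no element of $R\setminus\fp$ annihilates $m$; this is precisely $\Ann_R(m)\subseteq\fp$. Granted this, the cyclic submodule $Rm\subseteq M$ is isomorphic to $R/\Ann_R(m)$. Since $M\in\mathfrak{T}$ and $\mathfrak{T}$ is closed under submodules, $R/\Ann_R(m)\in\mathfrak{T}$. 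From $\Ann_R(m)\subseteq\fp$, the module $R/\fp$ is the quotient of $R/\Ann_R(m)$ by $\fp/\Ann_R(m)$, and closure under quotients yields $R/\fp\in\mathfrak{T}$.

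For part (ii), I would simply observe the inclusion $\wAss_R(M)\subseteq\Supp_R(M)$: if $\fp$ is minimal over $(0:_R m)$ for some $m\in M$ then in particular $\Ann_R(m)\subseteq\fp$, and the characterization recorded in the previous paragraph then forces $\fp\in\Supp_R(M)$. Therefore part (ii) follows at once from part (i).

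No substantive obstacle is expected. The only mild subtlety, made relevant by the absence of a Noetherian hypothesis on $R$, is that the characterization of $\Supp_R(M)$ in terms of existence of an element with annihilator inside $\fp$ has to be stated and checked by hand rather than cited from the classical Noetherian theory; but this is a one-line verification, as indicated above.
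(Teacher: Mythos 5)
Your proof is correct and follows essentially the same route as the paper: pick $m\in M$ with $(0:_R m)\subseteq\fp$, observe $R/(0:_R m)\cong Rm\subseteq M$ lies in $\mathfrak{T}$ by closure under submodules, then pass to the quotient $R/\fp$; part (ii) is the same reduction via $\wAss_R(M)\subseteq\Supp_R(M)$. The only difference is that you spell out the non-Noetherian justification of the support characterization, which the paper simply asserts.
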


\begin{proof} This is in \cite{Cah1}. However, we indicate a short proof of it for the convenience of the reader.
\begin{enumerate}
\item[$\mathrm{(i)}$] Let
$\fp\in \Supp_{R}(M)$. There exists  $m\in M$ such that $(0:_{R}m)\subseteq \fp$. Note that $R/(0:_{R}m)\cong Rm \subseteq M$,
it turns out that $R/(0:_{R}m)\in \mathfrak{T}$. The natural
epimorphism $R/(0:_{R}m)\longrightarrow R/\fp$, shows that $R/\fp\in \mathfrak{T}$.
\item[$\mathrm{(ii)}$] This follows by  part $\mathrm{(i)}$, if we apply the fact that $\wAss_R(M)\subseteq\Supp_R(M)$.
\end{enumerate}
\end{proof}

\begin{definition}\label{def1} Denote  the category of $R$-modules and $R$-homomorphism by $R\textbf{-Mod}$. Let $\mathcal{S}\subseteq R\textbf{-Mod}$ be a family  of $R$-modules and $M$ an $R$-module.
\begin{enumerate}
\item[$\mathrm{(i)}$] $\mathcal{S}$ is called a Serre class, if it is closed under taking submodules, quotients and extensions.
\item[$\mathrm{(ii)}$] Recall from \cite{Cah1} that a Serre class is called \textit{half centered}, if it satisfies in the converse of Lemma~\ref{key} (ii).
\end{enumerate}
\end{definition}

It is worth to recall that  torsion theories over Noetherian rings are half centered, see e.g. \cite[Lemma 2.1]{AT2}.
 The following example indicates that the torsion theory of
almost zero modules with respect to a valuation is not half centered.

\begin{example}\label{exam}
Let $(R,\fm)$ be a Noetherian complete local domain  which is not a
field and let $R^{+}$ denote the integral closure of
$R$ in the algebraic closure of its fraction field. It is proved that
$R^{+}$ is quasilocal. We denote its unique maximal ideal by $\fm_{R^{+}}$.
It is well-known that $R^{+}$ equipped with a value map
$v:R^{+}\to\mathbb{Q} \cup \{\infty\}$ satisfying the following conditions:
\begin{enumerate}
\item[$\mathrm{(i)}$]
$v(ab)=v(a)+v(b)$ for all $a,b \in R^{+}$;
\item[$\mathrm{(ii)}$]
$v(a+b) \ge \min\{v(a),v(b)\}$ for all $a,b \in R^{+}$;
\item[$\mathrm{(iii)}$]
$v(a)=\infty$ if and only if $a=0$.
\end{enumerate}
Also, $v$ is nonnegative on $R^+$ and positive on
$\fm_{R^+}$. In view of \cite[Definition 1.1]{RSS}, an $R^+$-module
$M$ is called almost zero with respect to $v$ if for all $m\in M$
and all $\epsilon
>0$ there is an element $a\in R^{+}$ with $v(a)<\epsilon$ such that
$am=0$.  We denote the class of almost zero modules with respect to $v$
by $\mathfrak{T}_v$. Clearly, $\mathfrak{T}_v$ is a torsion theory.
Now consider the following easy facts:

\begin{enumerate}
\item[$\mathrm{(a)}$] Let $0\neq\fp\in\Spec
R^{+}$. We show that $R^{+} / \fp \in \mathfrak{T}_v$.
Indeed, let $x\in \fp$. For any positive
integer $n$ set $f_n(X):=X^n- x\in R^+[X]$. Let $\zeta_n$ be a root
of $f_n$ in $R^+$. It follows that $\zeta_n\in \fp$, since
$(\zeta_n)^n=x\in\fp$. Keep in mind that $v$ is positive on $\fp$.
The equality $v(\zeta_n)=v(x)/n$ indicates that $\fp$ has elements
of small order. Thus, $R^{+} / \fp\in\mathfrak{T}_v$.
\item[$\mathrm{(b)}$] Let $\epsilon \in \mathbb{R}_{>0}$
and set $\fb_{\epsilon}:=\{x\in R^{+}| v(x)\geq \epsilon\}$.
Then  we claim that $R^{+}/\fb_{\epsilon}\notin\mathfrak{T}_v$. Indeed,
it is easy to see that $\fb_{\epsilon}$ is a nonzero ideal of $R^+$.
Clearly, annihilators of  $1+\fb_{\epsilon}$ consist of
elements of order greater or equal than $\epsilon$.
This shows that $R^{+}/\fb_{\epsilon}\notin\mathfrak{T}_v$.
\end{enumerate}
In view of $\mathrm{(a)}$, we have $R^{+} / \fp \in \mathfrak{T}_v$ for all $\fp\in\Supp_{R^{+}}(R^{+}/\fb_{\epsilon})$.
But by $\mathrm{(b)}$, $R^{+}/\fb_{\epsilon}\notin\mathfrak{T}_v$.
Therefore, $\mathfrak{T}_v$ is not half centered.
\end{example}

\begin{definition}\label{1} For an $R$-module $L$ we denote
\begin{enumerate}
\item[$\mathrm{(i)}$] $\{\fp \in \Supp_R (L) |R/\fp \notin \mathfrak{T}\}$ by $\mathfrak{T}-\Supp_{R}(L)$; and
\item[$\mathrm{(ii)}$] $\{\fp\in \wAss_R (L) |R/\fp \notin \mathfrak{T}\}$ by $\mathfrak{T}-\wAss_{R}(L)$.
\end{enumerate}
\end{definition}

\begin{lemma}\label{key1}
Let $\mathfrak{T}$ be a  torsion theory and $M$ an $R$-module. Consider the following conditions:
\begin{enumerate}
\item[$\mathrm{(a)}$] $x_{i}\notin \bigcup_{\fp\in
\mathfrak{T}-\wAss_{R}(M/(x_{1},\ldots,x_{i-1})M)}\fp$ for all $i=1,
\ldots, r$;
\item[$\mathrm{(b)}$] The sequence  $x_{1}, \ldots, x_{r}$ is a weak $M$-regular sequence with
respect to $\mathfrak{T}$;
\item[$\mathrm{(c)}$] For any $\fp \in \mathfrak{T}-\Supp_{R}(M)$, the elements
$x_{1}/1, \ldots, x_{r}/1 $  of the local ring $R_{\fp}$ form a weak
$M_{\fp}$-sequence.
\end{enumerate}
 Then the following assertions hold:
\begin{enumerate}
\item[$\mathrm{(i)}$] The implications
$(c)\Leftrightarrow(a)$ and $(b)\Rightarrow (c)$ are hold.
\item[$\mathrm{(ii)}$]
If $\mathfrak{T}$ is half centered, then $(a)\Rightarrow (b)$.\end{enumerate}
\end{lemma}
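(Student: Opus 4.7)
The plan is to prove $(b) \Rightarrow (c)$, the equivalence $(c) \Leftrightarrow (a)$ from Part $\mathrm{(i)}$, and then, under the half-centered hypothesis of Part $\mathrm{(ii)}$, the implication $(a) \Rightarrow (b)$. The main technical input is the non-Noetherian bijection between $\wAss_{R_\fp}(L_\fp)$ and $\{\fq \in \wAss_R(L) \mid \fq \subseteq \fp\}$, established via $(0:_{R_\fp} m/1) = (0:_R m) R_\fp$ and the order-isomorphism between $\Spec(R_\fp)$ and $\{\fq \in \Spec R \mid \fq \subseteq \fp\}$. Combined with the fact that an element of $R$ is a zero-divisor on $L$ iff it lies in some weakly associated prime of $L$, this gives the required dictionary.

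For $(b) \Rightarrow (c)$, I set $N_i := ((x_1,\ldots,x_{i-1})M :_M x_i)/(x_1,\ldots,x_{i-1})M$, which lies in $\mathfrak{T}$ by hypothesis. Since every torsion theory is closed under localization (as noted in the proof of Lemma~\ref{res}), $(N_i)_\fp \in \mathfrak{T}$. If $\fp \in \mathfrak{T}-\Supp_R(M)$ and $(N_i)_\fp \neq 0$ then $\fp \in \Supp_R(N_i)$, and Lemma~\ref{key}$\mathrm{(i)}$ would force $R/\fp \in \mathfrak{T}$, contradicting the choice of $\fp$. Hence $(N_i)_\fp = 0$, which exhibits $x_i/1$ as a non-zero-divisor on $M_\fp/(x_1,\ldots,x_{i-1})M_\fp$.

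For $(c) \Leftrightarrow (a)$ I argue both contrapositives. If $(a)$ fails with $x_i \in \fp$ and $\fp \in \mathfrak{T}-\wAss_R(M/(x_1,\ldots,x_{i-1})M)$, then $\fp \in \Supp_R(M)$ and $R/\fp \notin \mathfrak{T}$, so $\fp \in \mathfrak{T}-\Supp_R(M)$; localizing at $\fp$ places $x_i/1$ inside the weakly associated prime $\fp R_\fp$ of $M_\fp/(x_1,\ldots,x_{i-1})M_\fp$, violating $(c)$. Conversely, if $(c)$ fails at some $\fp \in \mathfrak{T}-\Supp_R(M)$, pull back the offending weakly associated prime to $\fq \in \wAss_R(M/(x_1,\ldots,x_{i-1})M)$ with $\fq \subseteq \fp$ and $x_i \in \fq$; the surjection $R/\fq \twoheadrightarrow R/\fp$ together with $R/\fp \notin \mathfrak{T}$ forces $R/\fq \notin \mathfrak{T}$, so $\fq$ lies in $\mathfrak{T}-\wAss$ and witnesses the failure of $(a)$.

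For $(a) \Rightarrow (b)$ under the half-centered hypothesis, every element of $N_i$ is annihilated by $x_i$, so $x_i$ lies in $(0:_R n)$ for each $n \in N_i$ and therefore in every $\fq \in \wAss_R(N_i)$. The inclusion $N_i \hookrightarrow M/(x_1,\ldots,x_{i-1})M$ gives $\wAss_R(N_i) \subseteq \wAss_R(M/(x_1,\ldots,x_{i-1})M)$; by $(a)$ each such $\fq$ must fail to lie in $\mathfrak{T}-\wAss$, i.e.\ $R/\fq \in \mathfrak{T}$. The half-centered property, which is the converse of Lemma~\ref{key}$\mathrm{(ii)}$, then yields $N_i \in \mathfrak{T}$. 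I expect this final step to be the delicate one: the half-centered hypothesis is indispensable here, since Example~\ref{exam} shows that without it one cannot pass from $R/\fq \in \mathfrak{T}$ for all $\fq \in \wAss_R(N_i)$ back to $N_i \in \mathfrak{T}$.
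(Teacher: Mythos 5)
Your proof is correct and for part $\mathrm{(i)}$ follows essentially the same route as the paper: the $(b)\Rightarrow(c)$ step localizes the colon quotient and appeals to Lemma~\ref{key}$\mathrm{(i)}$ together with closure under localization, and the $(a)\Leftrightarrow(c)$ equivalence rests on the same dictionary between $\wAss_{R_\fp}$ and $\{\fq\in\wAss_R\mid \fq\subseteq\fp\}$ plus the zero-divisor characterization. The only real divergence is in $(a)\Rightarrow(b)$: the paper disposes of this direction by citing \cite[Lemma 2.3]{AT1}, while you make the argument self-contained. Your version is clean and worth spelling out: $x_i$ kills the colon quotient $N_i$, hence lies in every $\fq\in\wAss_R(N_i)\subseteq\wAss_R\bigl(M/(x_1,\ldots,x_{i-1})M\bigr)$, hypothesis $(a)$ forces $R/\fq\in\mathfrak{T}$ for each such $\fq$, and the half-centered property converts this into $N_i\in\mathfrak{T}$. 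One small point worth making explicit: the paper defines half-centered as ``the converse of Lemma~\ref{key},'' which on its face is the converse of part $\mathrm{(i)}$ (the $\Supp$ version). You invoke the converse of part $\mathrm{(ii)}$ (the $\wAss$ version). These are in fact equivalent for a torsion theory, because every prime in $\Supp_R(N)$ contains some $\fq\in\wAss_R(N)$ (nonzero modules always have nonempty $\wAss$, and $\wAss$ commutes with localization) and $\mathfrak{T}$ is closed under quotients, so $R/\fq\in\mathfrak{T}$ propagates up to $R/\fp\in\mathfrak{T}$. Adding a sentence to that effect would close the only gap between what you use and what the paper literally defines.
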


\begin{proof}
First recall the following facts of an $R$-module $K$ from \cite[Theorem 3.3.1]{G}:
\begin{enumerate}
\item[$\mathrm{(1)}$]  The set of
zero-divisor of $K$ is $\bigcup_{\fp\in\wAss_R(K)}\fp$ ;
\item[$\mathrm{(2)}$]
$\fp\in \wAss_R (K)$ if and only if $\fp R_{\fp}\in\wAss_{R_{\fp}} (K_{\fp})$.
\end{enumerate}

Now we prove the Lemma.

\begin{enumerate}
\item[] $(b)\Rightarrow (c)$: Let $\fp\in \mathfrak{T}-\Supp_{R}M$. In view of Lemma~\ref{key},
we see
that $$(\frac{((x_{1},\ldots,x_{i-1})M:_{M}x_{i})}{(x_{1},\ldots,x_{i-1})M})_{\fp}=0$$ for all $1\leq i\leq r$. This yields the claim.

\item[] $(c)\Rightarrow (a)$: Let $1\leq i\leq r$ and set $L:=M/(x_{1},\ldots,x_{i-1})M$. Suppose $\fp\in\mathfrak{T}-\wAss_{R}(L)$. Then by
$(2)$, we see that $\fp R_{\fp}\in\wAss_{R_{\fp}}(L_{\fp})$. This along with
$(1)$ and our assumption yield that $x_i/1\notin\fp R_{\fp}$. So $x_i\notin \fp$.
\item[] $(a)\Rightarrow (c)$: Let $\fp\in\mathfrak{T}-\Supp_{R}(M)$. In view of  $(1)$, it is enough to show that $x_i/1\notin\fq R_{\fp}$ for all
$\fq R_{\fp}\in\wAss_{R_{\fp}}(M_{\fp}/(x_{1},\ldots,x_{i-1})M_{\fp})$.

Let $\fq R_{\fp}\in\wAss_{R_{\fp}}(M_{\fp}/(x_{1},\ldots,x_{i-1})M_{\fp})$. Then $\fq\subseteq \fp$ and $(2)$ implies that $\fq \in\wAss_{R}(M/(x_{1},\ldots,x_{i-1})M)$.
We have $R/ \fq\notin\mathfrak{T}$, because $R/ \fp\notin\mathfrak{T}$. By applying our assumption, we get $x_i\notin \fq$. This yields that $x_i/1\notin\fq R_{\fp}$, as claimed.
\item[] $(a)\Rightarrow (b)$: This is in  \cite[Lemma 2.3]{AT1}.
\end{enumerate}
\end{proof}

\begin{lemma}\label{ext}
Let $\mathfrak{T}$ be a  half centered torsion theory, $\fa$
a finitely generated ideal of $R$  with a generating set
$\underline{y}:=y_{1},\ldots,y_{s}$ and $M$ an $R$-module.
  Then the following assertions hold:
\begin{enumerate}
\item[$\mathrm{(i)}$] Let
$\underline{x}:=x_{1}, \ldots, x_{r}$ be a weak $M$-regular sequence with
respect  to $\mathfrak{T}$ in $\fa$. Then $H^{i}(\underline{y}, M)\in \mathfrak{T}$ for all $0\leq i\leq
r-1$. Also, $H^{r}(\underline{y}, M)\notin \mathfrak{T}$ if and
only if $H^{0}(\underline{y}, M/\underline{x}M)\notin \mathfrak{T}$.
\item[$\mathrm{(ii)}$] $\mathfrak{T}-\Kgrade_{R}(\fa,M)=\inf\{\Kgrade_{R_{\fp}}(\fa
R_{\fp},M_{\fp})|\fp\in\mathfrak{T}-\Supp_{R}(M)\}$.
\end{enumerate}
\end{lemma}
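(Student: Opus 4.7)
The plan for part $\mathrm{(i)}$ is to iterate Proposition~\ref{reg}. Since $x_1$ is weak $M$-regular with respect to $\mathfrak{T}$, that proposition yields $\mathfrak{T}-\Kgrade_R(\fa,M) = \mathfrak{T}-\Kgrade_R(\fa, M/x_1 M) + 1$. A direct inspection of Definition~\ref{seq} shows that the residual sequence $x_2,\ldots,x_r$ is weak $M/x_1M$-regular with respect to $\mathfrak{T}$, so induction on $r$ delivers
\[
\mathfrak{T}-\Kgrade_R(\fa,M) \;=\; \mathfrak{T}-\Kgrade_R(\fa, M/\underline{x}M) + r \;\geq\; r.
\]
This immediately gives $H^i(\underline{y},M)\in\mathfrak{T}$ for $0\leq i\leq r-1$. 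For the second claim of $\mathrm{(i)}$, given this vanishing, $H^r(\underline{y},M)\notin\mathfrak{T}$ is equivalent to $\mathfrak{T}-\Kgrade_R(\fa,M)=r$, which via the displayed identity is equivalent to $\mathfrak{T}-\Kgrade_R(\fa,M/\underline{x}M)=0$, i.e. to $H^0(\underline{y}, M/\underline{x}M)\notin\mathfrak{T}$.

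For part $\mathrm{(ii)}$, the plan is to set $r:=\mathfrak{T}-\Kgrade_R(\fa,M)$ and prove both inequalities, exploiting throughout that $\mathbb{K}^{\bullet}(\underline{y},-)$ consists of finitely generated free modules, so Koszul cohomology commutes with localization. For the easy inequality $r\leq \Kgrade_{R_\fp}(\fa R_\fp, M_\fp)$ with arbitrary $\fp\in\mathfrak{T}-\Supp_R(M)$: for $i<r$, $H^i(\underline{y},M)\in\mathfrak{T}$, so Lemma~\ref{key}$\mathrm{(i)}$ forces every $\fq\in\Supp_R H^i(\underline{y},M)$ to satisfy $R/\fq\in\mathfrak{T}$; in particular $\fp$ is not in that support, and hence $H^i(\underline{y}/1,M_\fp)=(H^i(\underline{y},M))_\fp=0$, which gives $\Kgrade_{R_\fp}(\fa R_\fp,M_\fp)>i$.

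The reverse inequality is where the half-centered hypothesis enters, and I expect it to be the main obstacle. Assume $r<\infty$ (otherwise trivial); then $H^r(\underline{y},M)\notin\mathfrak{T}$, so by the half-centered hypothesis there exists $\fp\in\mathfrak{T}-\wAss_R(H^r(\underline{y},M))$. Since $H^r(\underline{y},M)$ is a subquotient of a finite direct sum of copies of $M$, automatically $\fp\in\Supp_R(M)$; combined with $R/\fp\notin\mathfrak{T}$, this places $\fp$ in $\mathfrak{T}-\Supp_R(M)$. Weakly associated primes survive localization (fact $(2)$ recalled in the proof of Lemma~\ref{key1}), so $(H^r(\underline{y},M))_\fp\neq 0$, which yields $\Kgrade_{R_\fp}(\fa R_\fp,M_\fp)\leq r$ and completes the argument. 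Without half-centeredness there is no mechanism to pass from $H^r\notin\mathfrak{T}$ to a witnessing localization, as Example~\ref{exam} illustrates in the almost-zero setting; this is precisely why the hypothesis of the lemma is indispensable in this step.
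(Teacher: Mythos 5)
Your argument is correct, and the two parts differ from the paper in different ways. For part $\mathrm{(i)}$ the paper relies on \cite[Lemma 2.4]{AT1}, whose mechanism (as the cited facts from \cite{BH} make clear) is to localize at each $\fp\in\mathfrak{T}-\Supp_R(M)$, where $\underline{x}/1$ becomes a classical weak $M_\fp$-sequence, apply the standard depth-sensitivity of Koszul cohomology over $R_\fp$, and then use the half-centered hypothesis to pull the vanishing in $\mathfrak{T}$ back from the local data. You instead iterate Proposition~\ref{reg} on the residual sequence, obtaining the clean identity $\mathfrak{T}-\Kgrade_R(\fa,M)=\mathfrak{T}-\Kgrade_R(\fa,M/\underline{x}M)+r$, from which both statements of $\mathrm{(i)}$ fall out immediately. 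This is a genuinely different and arguably more efficient route, and it has the notable side benefit that part $\mathrm{(i)}$ is established for an arbitrary torsion theory, with the half-centered hypothesis nowhere used; the lemma's hypothesis is thus superfluous for that part. For part $\mathrm{(ii)}$, by contrast, your proof is essentially the same as the paper's (which is outsourced to \cite[Proposition 2.7]{AT1}): the easy inequality uses commutation of Koszul cohomology with localization together with Lemma~\ref{key}$\mathrm{(i)}$, and the reverse inequality uses the half-centered hypothesis, in its $\wAss$-form, to produce a prime $\fp\in\mathfrak{T}-\wAss_R(H^r(\underline{y},M))\subseteq\mathfrak{T}-\Supp_R(M)$ that survives localization and thus witnesses $\Kgrade_{R_\fp}(\fa R_\fp,M_\fp)\leq r$. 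Your handling of the boundary cases ($r=\infty$ and $\mathfrak{T}-\Supp_R(M)=\emptyset$) is also consistent.
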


\begin{proof} $\mathrm{(i)}$: This is in the proof of \cite[Lemma 2.4]{AT1}. Note that in that
argument $M$ does not need to be finitely generated. Also, recall from \cite[Theorem 1.6.16]{BH} and \cite[Proposition 1.6.10(d)]{BH} that,
if $\underline{w}:=w_{1}, \ldots, w_{\ell}$
is a sequence in $R$ and $(w_{1}, \ldots, w_{\ell})R$ contains a weak $M$-regular sequence $\underline{z}:=z_{1}, \ldots, z_{t}$, then
$H^i(\underline{w},M)=0$ for $0\leq i<t$ and $H^t(\underline{w},M)\cong H^0(\underline{w},M/\underline{z}M)$.

$\mathrm{(ii)}$: This is in the proof of \cite[Proposition  2.7]{AT1}. Note that  Koszul cohomology modules are commute with the localization \cite[Proposition 1.6.7]{BH}.
\end{proof}

Let $M$ be an $R$-module. Recall from \cite{DM} that $M$ is called \textit{weakly Laskerian}, if each quotient of $M$ has finitely many weakly associated prime ideals.
Now we are ready to prove Theorem~\ref{main2}.

\begin{theorem}\label{second}
Let $\mathfrak{T}$ be a  half centered torsion theory, $\fa$ a finitely generated ideal of $R$ and $M$ a weakly  Laskerian $R$-module.
Then $\mathfrak{T}-\cgrade_{R}(\fa,M)=\mathfrak{T}-\Kgrade_{R}(\fa,M)$.
\end{theorem}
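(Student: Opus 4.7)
The inequality $\mathfrak{T}-\cgrade_R(\fa,M)\leq\mathfrak{T}-\Kgrade_R(\fa,M)$ is already contained in Proposition~\ref{reg}, so my plan is to prove the reverse inequality by induction on $n\in\mathbb{N}_{0}$, showing that whenever $\mathfrak{T}-\Kgrade_R(\fa,M)\geq n$ there exists a weak $M$-regular sequence of length $n$ in $\fa$ with respect to $\mathfrak{T}$. The base case $n=0$ is vacuous. For the inductive step it suffices to produce a single element $x\in\fa$ with $(0:_M x)\in\mathfrak{T}$: once we have such an $x$, Proposition~\ref{reg} gives $\mathfrak{T}-\Kgrade_R(\fa,M/xM)\geq n-1$, and since any quotient of $M/xM$ is a quotient of $M$ the module $M/xM$ remains weakly Laskerian, so the induction hypothesis supplies a weak $M/xM$-regular sequence of length $n-1$ in $\fa$, which extends to the required length-$n$ sequence by prepending $x$.

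To locate such an $x$, I appeal to Lemma~\ref{key1}(ii), which (using that $\mathfrak{T}$ is half centered) translates the existence of $x\in\fa$ with $(0:_M x)\in\mathfrak{T}$ into the assertion $\fa\not\subseteq\bigcup_{\fp\in\mathfrak{T}-\wAss_R(M)}\fp$. Because $M$ is weakly Laskerian, $\wAss_R(M)$ is finite, so this is a finite union of primes; standard prime avoidance for finitely generated ideals then reduces the task to excluding the single possibility that $\fa\subseteq\fp_0$ for some $\fp_0\in\mathfrak{T}-\wAss_R(M)\subseteq\mathfrak{T}-\Supp_R(M)$.

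Ruling out this last configuration is the main obstacle, and the plan is to localize at $\fp_0$ and contradict Lemma~\ref{ext}(ii), which forces $\Kgrade_{R_{\fp_0}}(\fa R_{\fp_0},M_{\fp_0})\geq 1$; equivalently, $(0:_{M_{\fp_0}}\fa R_{\fp_0})=0$. Write $\fp_0$ as a minimal prime over $\Ann_R(m)$ for some $m\in M$. Then $\fp_0 R_{\fp_0}$ is the only prime of $R_{\fp_0}$ containing $\Ann_R(m)R_{\fp_0}$, so $\fp_0 R_{\fp_0}=\sqrt{\Ann_R(m)R_{\fp_0}}\subseteq\sqrt{\Ann_{R_{\fp_0}}(m/1)}$. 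Since $\fa R_{\fp_0}$ is finitely generated and contained in this radical, raising each of its generators to a sufficiently high power yields an integer $N$ with $(\fa R_{\fp_0})^{N}(m/1)=0$. On the other hand $m/1\neq 0$ in $M_{\fp_0}$, because any $t$ outside $\fp_0$ that annihilates $m$ would lie in $\Ann_R(m)\subseteq\fp_0$, a contradiction. Hence the smallest such $N$ is positive, and the submodule $(\fa R_{\fp_0})^{N-1}(m/1)$ is a nonzero submodule of $(0:_{M_{\fp_0}}\fa R_{\fp_0})$, delivering the contradiction with $\mathfrak{T}-\Kgrade_R(\fa,M)\geq 1$ and thus completing the inductive step.
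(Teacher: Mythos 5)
Your proof is correct, and it reaches the result by a route that is genuinely different from the one in the paper, although both arguments share the same backbone: the inequality $\mathfrak{T}-\cgrade_R(\fa,M)\leq\mathfrak{T}-\Kgrade_R(\fa,M)$ from Proposition~\ref{reg}, prime avoidance over the finite set $\mathfrak{T}-\wAss_R(-)$ (which is where the weakly Laskerian hypothesis enters), and the half-centered implication $(a)\Rightarrow(b)$ of Lemma~\ref{key1}. Where you diverge is in how you finish. The paper argues in one shot: it fixes a maximal weak $M$-sequence $\underline{x}$ of length $r=\mathfrak{T}-\cgrade_R(\fa,M)$, observes via prime avoidance and Lemma~\ref{key1} that $\fa\subseteq\fp$ for some $\fp\in\mathfrak{T}-\wAss_R(M/\underline{x}M)$, invokes the identity $\wAss_R(\Hom_R(R/\fa,L))=\wAss_R(L)\cap\Supp_R(R/\fa)$ for finitely presented $R/\fa$ (from Glaz) to place $\fp$ in $\wAss_R(\Hom_R(R/\fa,M/\underline{x}M))$, uses the half-centered converse in Lemma~\ref{key} to conclude $\Hom_R(R/\fa,M/\underline{x}M)\notin\mathfrak{T}$, and then applies the Koszul shift of Lemma~\ref{ext}(i) to obtain $H^r(\underline{y},M)\notin\mathfrak{T}$, whence $\mathfrak{T}-\Kgrade_R(\fa,M)\leq r$. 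You instead run an induction, producing one weak regular element at a time; the crux is excluding $\fa\subseteq\fp_0$ for some $\fp_0\in\mathfrak{T}-\wAss_R(M)$, and for that you localize at $\fp_0$, use Lemma~\ref{ext}(ii) (the localization formula for $\mathfrak{T}-\Kgrade$) to force $(0:_{M_{\fp_0}}\fa R_{\fp_0})=0$, and then contradict this by an elementary computation: with $\fp_0$ minimal over $\Ann_R(m)$, the ideal $\fa R_{\fp_0}$ lies in the radical of $\Ann_{R_{\fp_0}}(m/1)$, so a least power $(\fa R_{\fp_0})^{N-1}(m/1)$ furnishes a nonzero element of $(0:_{M_{\fp_0}}\fa R_{\fp_0})$. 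Net effect: you sidestep Glaz's $\wAss$--$\Hom$ identity and Lemma~\ref{ext}(i), replacing them with Lemma~\ref{ext}(ii) plus a hands-on argument with weakly associated primes, at the modest cost of organizing the proof as an induction. Both proofs are complete and rest on the same structural lemmas.
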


\begin{proof}
First note that  by Proposition~\ref{reg}, $\mathfrak{T}-\cgrade_{R}(\fa,M)\leq\mathfrak{T}-\Kgrade_{R}(\fa,M)$. In order
to prove the reverse inequality and without loss of generality, we may and do assume that $n:=\mathfrak{T}-\cgrade_{R}(\fa,M)<\infty$.
Let $\underline{x}:=x_1,\ldots, x_n$ be the supremum of the lengths of all weak $M$-regular sequences with respect to $\mathfrak{T}$
which are contained in $\fa$. It follows from the maximality of $\underline{x}$ and Lemma~\ref{key1} that $\fa \subseteq\fp$ for some
 $\fp\in \mathfrak{T}-\wAss_{R}(M/\underline{x}M)$. Keep in mind that $\fa$ is finitely generated. In particular, $R/ \fa$ is finitely
presented. Thus in view of \cite[Lemma 7.1.6]{G}, we see that$$\fp\in \wAss_R (M/ \underline{x}M)\cap\Supp_R (R/ \fa )=
\wAss_R(\Hom_{R}(R/ \fa,M/ \underline{x}M)).$$Lemma \ref{key} implies that $\Hom_{R}(R/{\fa},M/\underline{x}M)\notin\mathfrak{T}$,
and so $H^{0}(\underline{y};M/ \underline{x}M)\notin\mathfrak{T}$, where $\underline{y}:=y_1,\ldots,y_r$ is a generating set for $\fa$.
In view of  Lemma~\ref{ext} $\mathrm{(i)}$, $H^{n}(\underline{y}, M)\notin \mathfrak{T}$. Again by applying Lemma~\ref{ext} $\mathrm{(i)}$,
$H^{i}(\underline{y}, M)\in \mathfrak{T}$ for all $0\leq i\leq n-1$. Therefore, $\mathfrak{T}-\cgrade_{R}(\fa,M)=\mathfrak{T}-\Kgrade_{R}(\fa,M)$.
\end{proof}

\begin{remark}
Let $R$ be a  coherent ring, $\mathfrak{T}$ a  half centered torsion theory and $M$ a  finitely presented $R$-module.
Let $\underline{x}:=x_1,\ldots,x_n$ be a sequence of elements of $R$ with the property that $\mathfrak{T}-\Kgrade_{R}((x_1,\ldots,x_n)R,M)\geq n$.
Here we show that $\underline{x}$ is a weak $M$-regular sequence with respect to $\mathfrak{T}$.
Indeed, in view of Corollary ~\ref{add}, it is enough to show that $\mathfrak{T}-\Kgrade_{R}((x_1,\ldots,x_i)R,M)\geq i$ for all $i$.
For each $1 \le i< n$,  set $\underline{x}_{i}:=x_1,\ldots,x_i$. Let $\fp\in\Supp (H^j(\underline{x}_{i},M))$ for $0\leq j< i$. We shall achieve the claim by showing that  $R/\fp\in\mathfrak{T}$. Suppose on the contrary that and $R/\fp\notin\mathfrak{T}$ and look for a contradiction. In view of Lemma~\ref{ext} $(ii)$, we see that
$$\Kgrade_{R_{\fp}}(\underline{x} R_{\fp},M_{\fp}) \geq\mathfrak{T}-\Kgrade_{R}(\underline{x}R,M)\geq n.$$
Since $R_{\fp}$ is coherent and $M_{\fp}$ is finitely presented, by making direct modifications of the proof of \cite[Lemma 3.7]{AT2}, one may find that
the Koszul $($co$)$homology modules of $M_{\fp}$ with respect to $\underline{x}$ are finitely presented. Consider the following long exact sequence:
$$
\begin{CD}
\cdots @>>> H^{k}(\underline{x}_i,M_{\fp}) @>{x_{i+1}}>> H^{k}(\underline{x}_i,M_{\fp})@>>>H^{k+1}(\underline{x}_{i+1},M_{\fp}) @>>> \cdots.\\
\end{CD}
$$
Nakayama's lemma yields that $\Kgrade_{R_{\fp}}(\underline{x}_{i+1}R_{\fp},M_{\fp})\le \Kgrade_{R_{\fp}}(\underline{x}_{i}R_{\fp},M_{\fp})+1$. By an easy induction,
$\mathfrak{T}-\Kgrade_{R_{\fp}}(\underline{x}_{i}R_{\fp},M_{\fp}) \geq i$.
Hence, $H^j(\underline{x}_{i},M_{\fp})=0$.
Therefore, $\fp\notin\Supp (H^j(\underline{x}_{i},M))$, a contradiction.
\end{remark}

It is noteworthy to remark that the assumptions of Theorem~\ref{main1}  and Theorem~\ref{main2} are really needed.

\begin{example}\label{fin}$(i)$: Let $(R,\fm)$ be a $1$-dimensional complete local domain of prime characteristic and let  $R^{+}$ be the integral closure of $(R,\fm)$ in the algebraic closure of its fraction field. Recall from \cite[Lemma 5.3]{As} that $R^{+}$ is a valuation domain. Let $\fb\neq R^{+}$ be a nonzero finitely generated ideal of $R^{+}$. By \cite[Corollary 6.9]{As}, $\Ass_{R^{+}}(R^{+}/\fb)=\emptyset$. Thus $R^{+}/\fm_{R^{+}}$ can not be embedded in $R^{+}/\fb$. It turns out that $\Hom_{R^{+}}(R^{+}/\fm_{R^{+}},R^{+}/\fb)=0$, i.e., $\Egrade_{R^{+}}(\fm_{R^{+}},R^{+}/\fb)>0$. Clearly, $\dim(R^{+}/\fb)=0$.  In view of \cite[Lemma 3.2]{AT2}, we have $\Kgrade_{R^{+}}(\fm_{R^{+}},R^{+}/\fb)\leq \dim(R^{+}/\fb)$. Therefore, the claim $$\Kgrade_A(\fa,A)=\Egrade_A(\fa,A)$$does not true for infinitely generated ideals $\fa$ of non-Noetherian rings, even if the ring is coherent and regular.

$(ii)$:  Let $R:=\mathbb{F}[[X,Y]]$, where $\mathbb{F}$ is a field and
set $M:=\bigoplus_{0\neq r\in(X,Y)} R/rR$. As classified by
\cite[Page 91]{Str},  we know that $\Egrade_R(\fm,M)=1$ and
$\cgrade_R(\fm,M)=0$. Therefore, $\mathfrak{T}-\cgrade_{R}(\fa,M)=\mathfrak{T}-\Kgrade_{R}(\fa,M)$ does not true for general modules, even if $\mathfrak{T}=\{0\}$ and the ring is Noetherian.
\end{example}

%%%%%%%%%%%%%%%%%%%%%%%%%%%%%%%%%%%%%%%%%%%%%%%%%%%

\end{document}